\newtheorem{thm}{Theorem}[section]
\newtheorem{lem}[thm]{Lemma}
\newtheorem{prop}[thm]{Proposition}
\newtheorem{cor}[thm]{Corollary}
\theoremstyle{definition}
\newtheorem{defn}[thm]{Definition}
\theoremstyle{remark}
\newtheorem*{rem}{Remark}
\DeclareMathOperator{\tr}{tr}
\DeclareMathOperator{\cro}{cr}
\DeclareMathOperator{\lcm}{lcm}
\DeclareMathOperator{\sgn}{sgn}
\newcommand{\PGLQ}{\mathrm{PGL}_2\mathbb{Q}}
\newcommand{\PGLZ}{\mathrm{PGL}_2\mathbb{Z}}
\newcommand{\GLQ}{\mathrm{GL}_2\mathbb{Q}}
\newcommand{\GLZ}{\mathrm{GL}_2\mathbb{Z}}
\newcommand{\NN}{\mathbb{N}}
\newcommand{\QQ}{\mathbb{Q}}
\newcommand{\PP}{\mathbb{P}}
\newcommand{\RR}{\mathbb{R}}
\newcommand{\ZZ}{\mathbb{Z}}
\newcommand{\OO}{\mathcal{O}}
\newcommand{\xih}{\hat{\xi}}
\newcommand{\de}{\delta_\epsilon}
\newcommand{\textand}{\quad \text{and} \quad}
\title{Continued Fractions\\and Linear Fractional Transformations}
\author{Evan O'Dorney}
\date{\today}
\begin{document}
\maketitle

\begin{abstract}
Rational approximations to a square root $\sqrt{k}$ can be produced by iterating the transformation $f(x) = (dx+k)/(x+d)$ starting from $\infty$ for any $d \in \NN$. We show that these approximations coincide infinitely often with continued fraction convergents if and only if $R = 4d^2/(k-d^2)$ is an integer, in which case the continued fraction has a rich structure. It consists of the concatenation of the continued fractions of certain explicitly definable rational numbers, and it belongs to one of infinitely many families of continued fractions whose terms vary linearly in two parameters. We also give conditions under which the orbit $\{f^n(\infty)\}$ consists exclusively of convergents or semiconvergents and prove that with few exceptions it includes all solutions $p/q$ to the Pell equation $p^2 - k q^2 = \pm 1$.
\end{abstract}

\section{Introduction}

Let $k$ be a fixed non-square positive integer. Among the simplest of dynamical systems one can use to approximate the irrational number $\sqrt{k}$ is the family of linear fractional transformations (hereafter LFT's)
\begin{equation}\label{eq:f}
  f(x) = f_d(x) = \frac{d x + k}{x + d}
\end{equation}
on $\RR\PP^1$. If $d > 0$, then $f$ has $\sqrt{k}$ as its unique attracting fixed point, and thus the iterates $\{f^n(\infty)\}_{n=1}^{\infty} = \{f(\infty), f(f(\infty)), \ldots\}$ form a sequence of approximations converging to $\sqrt{k}$, which are rational if $d$ is. Such a procedure is not new. Theon of Smyrna was iterating the LFT $x \mapsto \frac{x+2}{x+1}$ to approximate $\sqrt{2}$ as early as the second century AD \cite{Cur}. Also, if $d = p/q$ is \emph{Pellian}, that is, satisfies the Pell equation $p^2 - kq^2 = \pm 1$, then the iterates of $f$ correspond to the powers of $p + q\sqrt{k}$ (see Lemma \ref{lem:AB}) used to produce further solutions to the Pell equation, which can then be used to solve more involved quadratic Diophantine problems such as Archimedes' cattle problem \cite{Vardi}.

Finally, we cannot leave out the connection with a much faster-converging dynamical system commonly used in electronic square-root algorithms. It is sometimes known as the ``Babylonian Method''; it is also equivalent to Newton's method applied to the equation $x^2 - k = 0$. It is the following easily discovered nonlinear transformation:
\[
  F(x) = \frac{1}{2}\left(x + \frac{k}{x}\right).
\]
It is not hard to verify that if $(a + b\sqrt{k})^2 = c + d\sqrt{k}$ ($a,b,c,d \in \QQ$), then $F(a/b) = c/d$, so the sequence of iterates of $F$ on a seed value $d$ consists of the iterates
\[
  d = f_d(\infty), \quad f_d^2(\infty), \quad f_d^4(\infty), \quad f_d^8(\infty), \ldots
\]
of the corresponding $f_d$.

Our concern in this paper is how well the iterates $f^i(\infty)$ approximate $\sqrt{k}$ in comparison to its canonical sequence of best possible approximations, the continued fraction convergents $\{p_n/q_n\}$ and especially those that are Pellian. In \cite{RST}, J. Rosen, K. Shankar, and J. Thomas considered the case $d = \lfloor \sqrt{k} \rfloor$ and proved that the orbit of $f$ coincides with the sequence of convergents if and only if the continued fraction has period $1$ or $2$, which in turn is equivalent to the condition that $\frac{2d}{k-d^2}$ is an integer. Building upon this line of reasoning, S. Mikkilineni proved in \cite{Mikki} that if $\frac{2d}{k-d^2} = \frac{m}{2}$, where $m \geq 3$ is an odd integer, then the sequence $\{f^n(\infty)\}_{n=0}^{\infty}$ forms a subsequence $\{p_{j_n}/q_{j_n}\}$ of the sequence of convergents, with the indices $j_n$ depending only on the parity of $k$. In particular, this subsequence always contains every Pellian convergent.

Mikkilineni conjectured that similar behavior occurs when $\frac{2d}{k-d^2} = \frac{m}{2^h}$ for any $h \geq 1$, provided that $2^{h-1}|d$ and certain mild inequalities hold (\cite{Mikki}, Conjecture 4.5). Specifically she conjectured that the orbit of $\infty$ forms a subsequence of the sequence of convergents; that this subsequence is invariant with respect to a certain parameter $m$; and that moreover the continued fraction has period independent of $m$ and terms linear in $m$. Unfortunately, her method of proof for $h=1$---computing all the terms and convergents of one period of the continued fraction---is not well suited for proving her conjectural generalization, as the periods of the continued fractions occurring in it can be arbitrarily long.

Our results prove Mikkilineni's conjectures and extend them in several different directions. First, we show that the relevance of $f$ to the continued fraction is encapsulated nicely by the quantity
\[
  R = \frac{4d^2}{k-d^2} :
\]
only finitely many iterates $f^n(\infty)$ are convergents unless $R \in \ZZ$, in which case the iterates include infinitely many Pellian convergents (Theorem \ref{thm:easy}). If, in addition, $d$ is the nearest integer to $\sqrt{k}$, then the orbit consists entirely of convergents of the continued fraction, a fact (Theorem \ref{thm:conv}) for which we give two proofs. The first uses clever manipulation of inequalities; the second deduces it as a corollary (Theorem \ref{thm:conv2}) of a striking result (Theorem \ref{thm:patterns}) that allows the continued fraction expansion of $\sqrt{k}$ to be computed as a concatenation of certain finite continued fractions. Since these rational numbers are defined in terms of division in modular arithmetics, which is itself usually computed by means of continued fractions, one can say that we have a continued fraction within a continued fraction. As a by-product, we get families of continued fractions whose terms are bilinear in two free parameters (Theorem \ref{thm:family}). Lastly, we show that in almost all cases, $R \in \ZZ$ implies that the orbit includes \emph{all} Pellian convergents (Theorem \ref{thm:pell}).

In the course of the development, it will become increasingly clear that the natural surds to study are not square roots of integers but general real algebraic integers of degree $2$: to cite the extreme case, the proof of Theorem \ref{thm:pell} rests on a kind of induction in which some of the $\sqrt{k}$ cases are reduced to $\frac{1 + \sqrt{4m+1}}{2}$ for integers $m$. Because the $\sqrt{k}$ case is of general interest, however, we follow several of our theorems with corollaries spelling out the results that they yield in this case.

\section{Conditions for connections between $f$ and the continued fraction}
\label{sec:prelim}

First, we reinterpret $f$ in a simple way.

\begin{lem}\label{lem:AB}
Let $k$ and $d$ be positive rational numbers with $k$ not a square. Then for each $n \geq 0$, we have $f^n(\infty) = a/b$, where $a$ and $b$ are the rational numbers satisfying
\[
  (d + \sqrt{k})^n = a + b\sqrt{k}.
\]
\end{lem}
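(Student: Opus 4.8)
The plan is to prove this by induction on $n$, exploiting the fact that iterating $f$ mirrors repeated multiplication by $d + \sqrt{k}$ in the ring $\QQ[\sqrt{k}]$. Since $k$ is not a square, $\{1,\sqrt{k}\}$ is a $\QQ$-basis of $\QQ[\sqrt{k}]$, so the rational numbers $a,b$ with $(d+\sqrt{k})^n = a + b\sqrt{k}$ are uniquely determined; I would read the ratio $a/b$ as a point of $\RR\PP^1$, taking the value $\infty$ when $b=0$, so that the statement makes literal sense for every $n$ (including $n=0$, where we should recover $f^0(\infty) = \infty$).

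For the base case $n=0$ one has $(d+\sqrt{k})^0 = 1 = 1 + 0\cdot\sqrt{k}$, giving $a=1$, $b=0$, and $a/b = 1/0 = \infty = f^0(\infty)$. For the inductive step, assume $f^n(\infty) = a/b$ with $(d+\sqrt{k})^n = a + b\sqrt{k}$. Multiplying through by $d+\sqrt{k}$ yields
\[
  (d+\sqrt{k})^{n+1} = (da + kb) + (a + db)\sqrt{k},
\]
so the coefficients at exponent $n+1$ are $a' = da+kb$ and $b' = a+db$. On the other hand,
\[
  f^{n+1}(\infty) = f\!\left(\tfrac{a}{b}\right) = \frac{d(a/b)+k}{(a/b)+d} = \frac{da+kb}{a+db} = \frac{a'}{b'},
\]
which closes the induction. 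The same computation can be packaged more transparently in matrix language: multiplication by $d+\sqrt{k}$ on $\QQ[\sqrt{k}]$ in the basis $\{1,\sqrt{k}\}$ is given by $M = \begin{pmatrix} d & k \\ 1 & d \end{pmatrix}$, which is also a matrix representative of the LFT $f$; hence $(d+\sqrt{k})^n$ corresponds to $M^n$, and applying $M^n$ to the column $[1:0]^{T}$ representing $\infty$ returns precisely the first column $[a:b]^{T}$ of $M^n$.

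I do not expect a genuine obstacle here — the lemma is essentially a bookkeeping identity — so the only point needing care is the handling of $\infty$ and of poles of $f$. This is exactly why it is cleanest to work throughout on $\RR\PP^1$ (equivalently, with $2\times 2$ matrices up to scalar): when $b=0$ one reads $a/b = \infty$ and $f(\infty) = d/1$, which matches $a'/b' = (da)/a$ after cancellation, and a pole of $f$ is absorbed by the same projective convention, so no step requires a separate case analysis.
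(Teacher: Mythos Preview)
Your proof is correct and follows essentially the same approach as the paper: a straightforward induction on $n$, with the base case $n=0$ and the inductive step carried out by multiplying $a+b\sqrt{k}$ by $d+\sqrt{k}$ and comparing with $f(a/b)$. Your additional remarks on the projective interpretation and the matrix $M$ are sound but not needed for the argument.
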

\begin{proof}
Induction. The $n=0$ case is trivial, and if $f^n(\infty) = a/b$, then
\[
  f^{n+1}(\infty) = f\left(\frac{a}{b}\right) = \frac{da + kb}{a + db}
\]
and
\[
  (a + b \sqrt{k})(d + \sqrt{k}) = da + kb + (a + db)\sqrt{k}. \qedhere
\]
\end{proof}
\begin{thm} \label{thm:easy}
Let $k$ be a non-square positive integer, and let $d$ be a positive rational number. The following are equivalent:
\begin{enumerate}[$(a)$]
  \item $R = \frac{4d^2}{k-d^2}$ is an integer.
  \item Some iterate is a Pellian convergent of $\sqrt{k}$.
  \item Infinitely many of the iterates are continued fraction convergents of $\sqrt{k}$.
\end{enumerate}
\end{thm}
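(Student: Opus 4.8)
The plan is to fix notation, isolate a single algebraic fact, and then run the cycle $(a)\Rightarrow(b)\Rightarrow(c)\Rightarrow(a)$. Write $d=p/q$ in lowest terms, put $\alpha=p+q\sqrt{k}\in\ZZ[\sqrt{k}]$ with conjugate $\bar\alpha=p-q\sqrt{k}$, and $\nu=\alpha\bar\alpha=p^2-kq^2\in\ZZ\setminus\{0\}$; note $\gcd(\nu,q)=1$, since $\nu\equiv p^2\pmod q$. By Lemma~\ref{lem:AB}, writing $\alpha^n=P_n+Q_n\sqrt{k}$ (with $P_n,Q_n\in\ZZ_{>0}$ for $n\ge1$) gives $f^n(\infty)=P_n/Q_n$ and $P_n^2-kQ_n^2=\nu^n$. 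Let $\varepsilon$ be the fundamental solution of $x^2-ky^2=\pm1$, so that the Pellian convergents of $\sqrt{k}$ are exactly the fractions $a/b$ with $a+b\sqrt{k}=\varepsilon^m$ ($m\ge1$), and $\ZZ[\sqrt{k}]^{\times}=\{\pm\varepsilon^m\}$; I will work in the ring of integers $\OO=\OO_{\QQ(\sqrt{k})}$, a Dedekind domain containing $\ZZ[\sqrt{k}]$. The fact on which everything rests is that $R\in\ZZ$ if and only if $(\alpha)=(\bar\alpha)$ as ideals of $\OO$. To see this, observe that $\alpha/\bar\alpha$ and $\bar\alpha/\alpha$ have product $1$ and sum
\[
  \frac{\alpha}{\bar\alpha}+\frac{\bar\alpha}{\alpha}=\frac{\alpha^2+\bar\alpha^2}{\alpha\bar\alpha}=\frac{(2p)^2-2\nu}{\nu}=\frac{4p^2}{\nu}-2=-R-2,
\]
using $R=4d^2/(k-d^2)=4p^2/(kq^2-p^2)=-4p^2/\nu$; hence $\alpha/\bar\alpha$ is a root of $X^2+(R+2)X+1$, which---as $\alpha/\bar\alpha\notin\QQ$---is its minimal polynomial over $\QQ$. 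So $\alpha/\bar\alpha$ is an algebraic integer exactly when $R\in\ZZ$; being of norm $1$, it is then a unit of $\OO$, and $\alpha/\bar\alpha\in\OO^{\times}$ is equivalent to $(\alpha)=(\bar\alpha)$.

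For $(a)\Rightarrow(b)$ I would argue: assuming $R\in\ZZ$, set $u=\alpha/\bar\alpha\in\OO^{\times}$, so that $\alpha^2=u\nu$. From $\alpha^2-\bar\alpha^2=4pq\sqrt{k}>0$ we get $|u|>1$, so $u$ has infinite order; since $\langle u\rangle$ and $\langle\varepsilon\rangle$ are infinite cyclic subgroups of the rank-one group $\OO^{\times}$, their intersection is nontrivial, giving $u^e=\varepsilon^{m_0}$ for some $e\ge1$ and $m_0\ne0$. Then $\alpha^{2e}=\varepsilon^{m_0}\nu^e$, so $f^{2e}(\infty)=P/Q$ where $\varepsilon^{m_0}=P+Q\sqrt{k}\in\ZZ[\sqrt k]$; since $P^2-kQ^2=\pm1$ and $f^{2e}(\infty)>0$, in lowest terms $P/Q$ is a positive solution of the Pell equation, i.e.\ a Pellian convergent.

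For $(b)\Rightarrow(c)$: if $f^N(\infty)=a_0/b_0$ is a Pellian convergent, then $\gcd(a_0,b_0)=1$ and $a_0+b_0\sqrt{k}=\varepsilon^{m_0}$ with $m_0\ge1$; comparing with $P_N/Q_N$ forces $\alpha^N=\ell\,\varepsilon^{m_0}$ with $\ell=\gcd(P_N,Q_N)\in\ZZ_{>0}$, hence $\alpha^{Ni}=\ell^i\varepsilon^{m_0 i}$ and $f^{Ni}(\infty)$ is a Pellian convergent for every $i\ge1$---infinitely many. For $(c)\Rightarrow(a)$, let $\mathcal C$ be the infinite set of $n$ with $f^n(\infty)$ a convergent; for $n\in\mathcal C$ write $f^n(\infty)=\pi_n/\rho_n$ in lowest terms, so $\alpha^n=g_n(\pi_n+\rho_n\sqrt{k})$ with $g_n=\gcd(P_n,Q_n)$. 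A convergent of $\sqrt{k}$ satisfies $|\pi_n-\rho_n\sqrt{k}|<1/\rho_n$, whence $|\pi_n^2-k\rho_n^2|<2\sqrt{k}+1$. Passing to $\OO$-ideals, $(\alpha)^n=(g_n)J_n$ with $J_n=(\pi_n+\rho_n\sqrt{k})$, and conjugating, $(\bar\alpha)^n=(g_n)\overline{J_n}$, so the fractional ideal $D=(\alpha)(\bar\alpha)^{-1}$ satisfies $D^n=J_n\overline{J_n}^{-1}$. For each prime $\mathfrak p$ of $\OO$ this gives $n\,|v_{\mathfrak p}(D)|\le\max(v_{\mathfrak p}(J_n),v_{\mathfrak p}(\overline{J_n}))$, so $N(\mathfrak p)^{\,n|v_{\mathfrak p}(D)|}\le\max(N(J_n),N(\overline{J_n}))=|\pi_n^2-k\rho_n^2|<2\sqrt{k}+1$ for all $n\in\mathcal C$; since $\mathcal C$ is infinite, $v_{\mathfrak p}(D)=0$ for every $\mathfrak p$, so $(\alpha)=(\bar\alpha)$ and hence $R\in\ZZ$.

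The step I expect to be the main obstacle is the reformulation $R\in\ZZ\iff(\alpha)=(\bar\alpha)$: the inventive point is recognizing that the integrality of $R$ is exactly the integrality of the single number $\alpha/\bar\alpha$, after which the three implications are short applications of standard facts (Dirichlet's unit theorem, unique factorization of ideals in $\OO$, the bound $|p^2-kq^2|<2\sqrt{k}+1$ for convergents, and that Pell solutions are convergents). The recurring technical nuisance is that $k$ need not be squarefree, so $\ZZ[\sqrt{k}]$ may be a proper suborder of $\OO$; this is why the reformulation is stated in $\OO$ and why $(a)\Rightarrow(b)$ passes to a power $u^e$ rather than to $u$ itself. (As a by-product the argument shows $(a)$, $(b)$, $(c)$ are each equivalent to the orbit containing infinitely many Pellian convergents.)
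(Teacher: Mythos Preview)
Your proof is correct and takes a genuinely different route from the paper's.

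The paper splits the equivalence as $(b)\Leftrightarrow(c)$ and $(a)\Leftrightarrow(b)$. For $(c)\Rightarrow(b)$ it uses the pigeonhole principle on the period $L$ of the continued fraction: two iterates that are convergents at indices congruent modulo $L$ are related by both a power of $f$ and a power of the ``shift-by-one-period'' LFT $g$, and comparing these forces a Pellian iterate. For $(a)\Leftrightarrow(b)$ it proves a self-contained matrix lemma (Lemma~\ref{lem:trace}): for $A\in\GLQ$, $(\tr A)^2/\det A\in\ZZ$ if and only if some $A^n$ is a rational multiple of a matrix in $\GLZ$. The argument there is via Cayley--Hamilton and periodicity of linear recurrences modulo the common denominator of the entries, with a preliminary reduction to $\det A$ a square.

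Your key insight is the reformulation $R\in\ZZ\iff\alpha/\bar\alpha\in\OO^\times\iff(\alpha)=(\bar\alpha)$, obtained from the minimal polynomial $X^2+(R+2)X+1$ of $\alpha/\bar\alpha$. This is the exact number-theoretic content of the paper's trace--determinant lemma, recast in $\OO_K$ rather than $\GLQ$. From there your $(a)\Rightarrow(b)$ uses Dirichlet's unit theorem in place of the paper's recurrence-periodicity argument, and your $(c)\Rightarrow(a)$ replaces the pigeonhole-on-the-period step by the uniform bound $|\pi_n^2-k\rho_n^2|<2\sqrt{k}+1$ on convergents together with unique factorization of ideals, forcing $v_{\mathfrak p}\bigl((\alpha)(\bar\alpha)^{-1}\bigr)=0$ for every prime $\mathfrak p$.

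What each approach buys: the paper's argument is entirely elementary (no Dedekind domains, no unit theorem) and stays within the LFT/continued-fraction framework used throughout; your argument is shorter once one is willing to invoke standard algebraic-number-theory machinery, yields the clean invariant characterization $(\alpha)=(\bar\alpha)$, and---notably---proves $(c)\Rightarrow(a)$ without ever mentioning the period of the continued fraction. One small cosmetic point: when you intersect $\langle u\rangle$ and $\langle\varepsilon\rangle$ inside $\OO^\times$, the torsion $\{\pm1\}$ means you may first need to pass to $u^2$ (or note that positive units form an infinite cyclic group) to guarantee a relation $u^e=\varepsilon^{m_0}$ rather than $u^e=-\varepsilon^{m_0}$; this is harmless for your conclusion since only the ratio $P/Q$ matters.
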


\begin{proof}
We first prove that (b) is equivalent to (c). If (b) holds, then
\[
  (d + \sqrt{k})^n = r(p + q\sqrt{k}),
\]
where $r$ is rational and $p$ and $q$ are integers satisfying the Pell equation $p^2 - k q^2 = \pm 1$. Then for all $i \geq 1$, we have
\[
  (d + \sqrt{k})^{in} = r^i(P + Q\sqrt{k}),
\]
where (due to the multiplicativity of the norm) $(P,Q)$ also satisfies the Pell equation. Then $f^{in}(\infty) = P/Q$ is Pellian and thus a continued fraction convergent.

Conversely, assume that $\{f^n(\infty)\}$ includes infinitely many convergents $p_j/q_j = [c_0,\ldots,c_{j-1}]$ of the continued fraction $\sqrt{k} = [c_0,c_1,\ldots]$. Then, by the pigeonhole principle, two of these have indices that are congruent mod $L$, the period of the continued fraction. Suppose that
\[
  f^{n_1}(\infty) = \frac{p_j}{q_j} \textand
  f^{n_2}(\infty) = \frac{p_{j+\ell L}}{q_{j+\ell L}}.
\]
Since $\sqrt{k} = [c_0,\overline{c_1,\ldots,c_L}]$ is almost purely periodic, there is an LFT $g(x) = [c_0,c_1,\ldots,c_{L-1},c_L-c_0+x]$ that fixes $\sqrt{k}$ (and, by rationality, $-\sqrt{k}$) and advances each convergent to the $L$th succeeding one. Note that $f^{n_2-n_1}$ and $g^\ell$ take the same value at the three points $\sqrt{k}$, $-\sqrt{k}$, and $p_j/q_j$; thus they are equal. In particular $f^{n_2 - n_1}(\infty) = g^\ell(\infty) = p_{\ell L}/q_{\ell L}$ is Pellian.

To connect (a) and (b), we use the matrix interpretation of transformations in $\PGLQ$.

\begin{lem}\label{lem:trace}
  Let $A \in \GLQ$ be a matrix representing a transformation in $\PGLQ$. The following conditions are equivalent:
\begin{enumerate}[$(a)$]
\item $\dfrac{(\tr A)^2}{\det A}$ is an integer.
\item There exists $n \in \mathbb{N}$ such that $A^n$ represents a transformation in $\PGLZ$; that is, $A^n = r B$ where $r \in \QQ$ and $B \in \GLZ$.
\end{enumerate}
\end{lem}
\begin{proof}
It is obvious that condition (b) is unchanged if $A$ is replaced by $A^2$. Let us prove that (a) has the same property. If the eigenvalues of $A$ are $\lambda_1, \lambda_2 \in \mathbb{C}$, then $\tr A = \lambda_1 + \lambda_2$, $\det A = \lambda_1 \lambda_2$, and
\[
	\tr A^2 = \lambda_1^2 + \lambda_2^2 = (\lambda_1 + \lambda_2)^2 - 2 \lambda_1 \lambda_2
	= (\tr A)^2 - 2 \det A.
\]
Consequently
\[
	\frac{(\tr A^2)^2}{\det A^2} = \left(\frac{(\tr A)^2 - 2 \det A}{\det A}\right)^2 =
	\left(\frac{(\tr A)^2}{\det A} - 2 \right)^2.
\]
The claim now follows from the fact that if $x \in \QQ$ and $(x-2)^2 \in \ZZ$ then $x \in \ZZ$.

Thus we may restrict to the case where $A$ is the square of a matrix in $\PGLQ$. In particular, we may assume that $\det A$ is a square in $\mathbb{Q}$, which, by scaling, we may take to be $1$. Then condition (a) becomes the statement that $\tr A$ (which we will denote by $t$) is an integer. Condition (b), since $A^n$ already has determinant $1$, becomes the condition that $A^n$ has integer entries for some $n \in \mathbb{N}$.

Let us prove that (b) implies (a). The eigenvalues of $A$ have product $1$; denote them by $\lambda$ and $1/\lambda$. Suppose that $A^n$ has integer entries and let $T = \tr A^n$. Then $\lambda^n + 1/\lambda^n = T$, or $\lambda^{2n} - T \lambda^n + 1 = 0$. This implies that $\lambda$ is an algebraic integer, and symmetrically we know that $1/\lambda$ is an algebraic integer. Thus $t = \lambda + 1/\lambda$ is an algebraic integer. Since $t$ is rational, $t$ must be an integer.

Now let us assume (a), that $t$ is an integer, and prove (b). Let $m$ be a common denominator for the entries of $A$, i.e$.$ a nonzero integer such that $mA$ has integer entries. By the Cayley-Hamilton theorem,
\[
	mA^{n+1} - tmA^n + mA^{n-1} = 0,
\]
from which we see that $mA^n$ has integer entries for all $n \geq 0$. Let $x_{i,j}(n)$ ($i, j \in \{1, 2\}$, $n \geq 0$) denote the $(i,j)$ entry of $mA^n$. We have the linear recurrence
\[
	x_{i,j}(n+1) - t x_{i,j}(n) + x_{i,j}(n-1) = 0.
\]
Mod $m$, the sequence $x_{i,j}(n)$ for fixed $i,j$ must be purely periodic (this is a general property of linear recursive sequences whose leading and trailing coefficients are relatively prime to the modulus). By taking the LCM over the four possible combinations $(i,j)$, we find that there is a period $\ell$ with respect to which all four sequences are periodic. We know that $x_{i,j}(0) \equiv 0$ mod $m$; it follows that $x_{i,j}(\ell) \equiv 0$ mod $m$, that is, that $A^\ell$ has integer entries.
\end{proof}
To prove the theorem, take $A = \begin{bmatrix} d & k \\ 1 & d \end{bmatrix}$. Condition (a) of the lemma is clearly equivalent to (a) of the theorem. If (b) of the theorem holds, then for some $n$, $f^n$ is the unique transformation
\[
  x \mapsto \frac{px + kq}{qx + p}
\]
fixing $\pm \sqrt{k}$ and taking $\infty$ to the Pellian convergent $p/q$. Then
\begin{equation}\label{eq:pellmatrix}
	A^n = r \begin{bmatrix} p & kq \\ q & p \end{bmatrix}
\end{equation}
for some $r \in \QQ$. Since $\det A^n/r = p^2 - kq^2 = \pm 1$, we have (b) of the lemma. Conversely, if (b) of the lemma holds, then \eqref{eq:pellmatrix} holds for some $r \in \QQ$ and $p,q \in \ZZ$ satisfying $p^2 - kq^2 = \pm1$; thus $f^n(\infty) = p/q$ is Pellian.
\end{proof}

Under certain conditions, the other iterates of $f$ bear a significant relationship to the continued fraction as well.
\begin{thm} \label{thm:conv}
If $d$ is the nearest integer to $\sqrt{k}$ (that is, $d = \lfloor k + 1/2 \rfloor$) and $R \in \ZZ$, then the iterates of $f$ on $\infty$ are all convergents of the continued fraction for $\sqrt{k}$.
\end{thm}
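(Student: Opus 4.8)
The plan is to show, via Legendre's criterion, that each iterate is \emph{so} good an approximation to $\sqrt k$ that it cannot avoid being a convergent. Recall Legendre's theorem: a rational $p/q$ in lowest terms with $q\ge 1$ and $\bigl|\sqrt k-p/q\bigr|<\tfrac{1}{2q^2}$ is a convergent of $\sqrt k$. By Lemma~\ref{lem:AB}, $f^n(\infty)=a_n/b_n$ with $(d+\sqrt k)^n=a_n+b_n\sqrt k$; write $g_n=\gcd(a_n,b_n)$, $\delta=k-d^2$ (nonzero, as $k$ is not a square), and $\rho=\tfrac{d-\sqrt k}{d+\sqrt k}$, so $0<|\rho|<1$. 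Conjugating gives $a_n-b_n\sqrt k=(d-\sqrt k)^n$, whence $a_n^2-kb_n^2=(d^2-k)^n$ and $2\sqrt k\,b_n=(d+\sqrt k)^n-(d-\sqrt k)^n$, and therefore
\[
  \Bigl|\sqrt k-\frac{a_n}{b_n}\Bigr|=\frac{|\delta|^n}{b_n(d+\sqrt k)^n}
  \textand
  \frac{2b_n}{(d+\sqrt k)^n}=\frac{1-\rho^n}{\sqrt k}.
\]
The reduced form of $a_n/b_n$ is $p_n/q_n$ with $q_n=b_n/g_n\ge 1$, so Legendre's bound $\bigl|\sqrt k-a_n/b_n\bigr|<g_n^2/(2b_n^2)$ is, after clearing denominators, exactly
\[
  (1-\rho^n)\,|\delta|^n<\sqrt k\;g_n^2 .
\]
Everything reduces to a good lower bound on $g_n$ and a matching upper bound on $(1-\rho^n)|\delta|^n$.

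The key lemma is: if $R\in\ZZ$ then $g_n\ge |\delta|^{(n-1)/2}$ for all $n\ge 1$. The hypothesis $R\in\ZZ$ says precisely $4d^2=R\delta$. From $(d+\sqrt k)^n=(d+\sqrt k)(d+\sqrt k)^{n-1}$ one reads off $a_n=d\,b_n+\delta\,b_{n-1}$ and the two-term recursion $b_n=2d\,b_{n-1}+\delta\,b_{n-2}$, $b_0=0$, $b_1=1$; consequently $g_n=\gcd(a_n,b_n)=\gcd(\delta b_{n-1},\,b_n)$. Substituting $4d^2=R\delta$ into the recursion and inducting on $m$ gives $b_{2m+1}=\delta^m Q_m(R)$ ($m\ge 0$) and $b_{2m}=2d\,\delta^{m-1}P_m(R)$ ($m\ge 1$) for integer polynomials with $P_1=Q_0=1$, $Q_m=R\,P_m+Q_{m-1}$, $P_{m+1}=Q_m+P_m$. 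Feeding this into $g_n=\gcd(\delta b_{n-1},b_n)$: for $n=2m+1$ we get $g_n=|\delta|^m\gcd(2d\,P_m,\,Q_m)\ge|\delta|^m$, and for $n=2m$ we get $g_n=|\delta|^{m-1}\gcd(\delta\,Q_{m-1},\,2d\,P_m)\ge|\delta|^{m-1}\gcd(\delta,2d)$, which is at least $|\delta|^{m-1}\sqrt{|\delta|}$ because $\delta\mid 4d^2$ forces $\gcd(\delta,2d)^2\ge|\delta|$ (write $|\delta|=\gamma s$, $2d=\gamma t$ with $\gcd(s,t)=1$; then $s\mid\gamma t^2$, so $s\mid\gamma$, so $|\delta|\le\gamma^2$). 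In either parity this equals $|\delta|^{(n-1)/2}$; the case $n=1$ ($g_1=1$) is trivial.

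Granting $g_n^2\ge|\delta|^{n-1}$, it suffices to prove $(1-\rho^n)\,|\delta|<\sqrt k$, and this is where $d$ being the nearest integer to $\sqrt k$ (so $|d-\sqrt k|<\tfrac12$) enters. If $k<d^2$ then $\rho\in(0,1)$, so $1-\rho^n<1$ and $(1-\rho^n)|\delta|<d^2-k$; but $d^2-k$ is a positive integer strictly less than $d$ (otherwise $d-\sqrt k\ge d/(d+\sqrt k)>\tfrac12$), hence $\le d-1<\sqrt k$. If $k>d^2$ and $n$ is even, again $\rho^n>0$ and $(1-\rho^n)|\delta|<k-d^2=(\sqrt k-d)(\sqrt k+d)<\sqrt k$. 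The only remaining case is $k>d^2$ with $n$ odd, where $1-\rho^n=1+|\rho|^n\le 1+|\rho|$; since $|\rho|=(\sqrt k-d)/(\sqrt k+d)$, a one-line computation collapses $(1+|\rho|)(k-d^2)$ to $2\sqrt k\,(\sqrt k-d)$, which is $<\sqrt k$ precisely because $\sqrt k-d<\tfrac12$. (For $n=1$ this is transparent directly: $a_1/b_1=d$ is within $\tfrac12$ of $\sqrt k$, hence is $p_0$ or $p_1$.) This closes the proof.

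I expect the divisibility lemma to be the real obstacle --- not any individual step of it, but settling on the formulation. One has to anticipate that the right target is $g_n\gtrsim|\delta|^{n/2}$ (anything appreciably weaker, such as what the super\-multiplicativity $g_{m+n}\ge g_m g_n$ yields by itself, loses to the error term when $k$ is small), that eliminating the $a_n$ produces the clean recursion $b_n=2d\,b_{n-1}+\delta\,b_{n-2}$, and that the hypothesis $R\in\ZZ$ is exactly the divisibility $\delta\mid 4d^2$ that makes powers of $\delta$ drop out of that recursion. Once the lemma is available, the surrounding inequalities are a routine case check.
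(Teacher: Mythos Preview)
Your proof is correct and follows the same three-step skeleton as the paper---bound the Pellian error of each reduced iterate, show $|k-d^2|<\sqrt k$, then invoke a convergence criterion---but with different tools at two junctures. Your divisibility lemma $g_n^2\ge|\delta|^{n-1}$ is exactly equivalent to the paper's Step~(1) that the reduced iterate $p/q$ satisfies $|p^2-kq^2|\le|k-d^2|$, since $p^2-kq^2=(d^2-k)^n/g_n^2$; but where the paper argues via algebraic number theory (observing that $\zeta=(d+\sqrt k)^2/(d^2-k)$ is a unit in $\mathcal{O}_K$, and then bounding the least $s$ with $s\zeta^i\in\ZZ[\sqrt k]$ by an $\lcm$ computation), you extract powers of $\delta$ directly from the recursion $b_n=2d\,b_{n-1}+\delta\,b_{n-2}$ using $4d^2=R\delta$. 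Your route is more elementary and entirely self-contained; the paper's is shorter and makes the role of the unit $\zeta$ transparent. For the final step you use Legendre's criterion, whereas the paper quotes the norm-form criterion $|p^2-kq^2|<\sqrt k\Rightarrow$ convergent (Lemma~\ref{lem:conv}); these are near-equivalent, but Legendre saddles you with the extra factor $1-\rho^n$, which exceeds $1$ when $k>d^2$ and $n$ is odd and forces your Case~3 identity $(1+|\rho|)(k-d^2)=2\sqrt k(\sqrt k-d)<\sqrt k$, a split the paper's route avoids by working with the norm directly.
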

\begin{proof}
The proof will proceed in the following steps:
\begin{enumerate}[(1)]
\item We will prove that any iterate of $f$ has the form $p/q$ where $|p^2 - k q^2| \leq |k - d^2|$;
  \item We will prove that $|k - d^2| < \sqrt{k}$;
  \item We will appeal to a well-known theorem that if $p/q$ is a positive fraction satisfying $|p^2 - k q^2| < \sqrt{k}$, then $p/q$ is a convergent of $\sqrt{k}$.
\end{enumerate}

To prove step (1), let $K$ be the number field $\QQ[\sqrt{k}]$; let $\OO_K$ be its ring of integers, and let $\OO$ be the order $\ZZ[\sqrt{k}] \subseteq \OO_K$. Note that
\begin{align*}
  \zeta &= \frac{\left(d + \sqrt{k}\right)^2}{d^2 - k} = \frac{R + 2 + \sqrt{R(R+4)}}{2}
\end{align*}
is a unit in $\OO_K$ (indeed, it satisfies the equation $\zeta^2 - (R+2)\zeta + 1 = 0$). If $n = 2i+1$ is odd, then
\[
  \frac{(d+\sqrt{k})^n}{(d^2-k)^i} = (d + \sqrt{k}) \zeta^i \in \OO \cdot \OO_K = \OO
\]
is an element $p + q\sqrt{k} \in \OO$ with norm $p^2 - k q^2 = d^2 - k$; and we have $f^n = p/q$ by Lemma \ref{lem:AB}.

If $n = 2i$ is even, then we must look at $\zeta^i$ instead, and we seek an $s \in \NN$ such that $s\zeta^i \in \OO$. It is evident that any $s$ that works for $i=1$ will work for all $i$. So we want $s\zeta \in \OO$, which holds if and only if the irrational part
\[
  s \cdot \frac{2d\sqrt{k}}{d^2 - k}
\]
of $s\zeta$ is a multiple of $\sqrt{k}$, from which it follows that the minimal $s$ is
\begin{align*}
  s &= \lcm\left(\frac{|k - d^2|}{2d}, 1\right)
\end{align*}
and so (letting $S = |k - d^2|$ for brevity)
\begin{align*}
  s^2 &= \lcm\left(\frac{S^2}{4d^2}, 1\right) \\
  & \leq \lcm\left(\frac{S^2}{4d^2}, S\right) \\
  & = \lcm\left(\frac{S}{R},S\right) \\
  & = S,
\end{align*}
as desired.

Next, we prove that $|d^2 - k| < \sqrt{k}$ by arguing that
\[
  |d^2 - k| = |d - \sqrt{k}| (d + \sqrt{k}) < \frac{d + \sqrt{k}}{2}.
\]
Since $d$ is the nearest integer to $\sqrt{k}$, the average $(d + \sqrt{k})/2$ sits between the same two consecutive integers as $\sqrt{k}$ does, and hence the integer $|d^2 - k|$ is less than $(d + \sqrt{k})/2$ if and only if it is less than $\sqrt{k}$.

To finish the proof, we appeal to the following well-known result.
\end{proof}

\begin{lem}\label{lem:conv}
If $p$ and $q$ are positive integers such that $|p^2 - k q^2| < \sqrt{k}$, then $p/q$ is a convergent of $\sqrt{k}$.
\end{lem}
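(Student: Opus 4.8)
The plan is to reduce the statement to the classical criterion of Legendre: \emph{if $\alpha$ is irrational and $p/q$ is in lowest terms with $\bigl|\alpha - \tfrac{p}{q}\bigr| < \tfrac{1}{2q^2}$, then $p/q$ is a convergent of $\alpha$}, which I will take as known. So the task is to turn the hypothesis $|p^2 - kq^2| < \sqrt{k}$ into a bound of this shape. First I would reduce to the case $\gcd(p,q) = 1$: replacing $p/q$ by its reduced form leaves the rational number unchanged and can only decrease $|p^2 - kq^2|$. I would also note that $p^2 - kq^2 \neq 0$ since $k$ is not a square. Then I would factor
\[
  |p^2 - kq^2| \;=\; |p - q\sqrt{k}|\,(p + q\sqrt{k}) \;<\; \sqrt{k},
\]
and distinguish the two signs of $p - q\sqrt{k}$.

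If $p > q\sqrt{k}$, the rest is immediate: $p + q\sqrt{k} > 2q\sqrt{k}$, so $0 < p - q\sqrt{k} < \tfrac{1}{2q}$, hence $\bigl|\sqrt{k} - \tfrac{p}{q}\bigr| < \tfrac{1}{2q^2}$ and Legendre's criterion applies. The only feature of $k$ used here is that it is a positive rational with $\sqrt{k}$ irrational, which I would record for reuse.

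The case $p < q\sqrt{k}$ is the one that needs an idea, and I expect it to be the main obstacle, since now $p + q\sqrt{k} < 2q\sqrt{k}$ and the naive estimate runs the wrong way. My plan is to pass to reciprocals: $(q,p)$ is an approximation $q/p$ to $1/\sqrt{k} = \sqrt{1/k}$, and $q^2 - \tfrac1k p^2 = \tfrac{kq^2 - p^2}{k}$ is positive and smaller than $\sqrt{k}/k = \sqrt{1/k}$; applying the $p > q\sqrt{k}$ argument with radicand $1/k$ and the pair $(q,p)$ (legitimate because $q\sqrt{k} > p$ and $1/\sqrt{k}$ is irrational) shows that $q/p$ is a convergent of $1/\sqrt{k}$. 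To come back, I would use that $k \geq 2$ forces $a_0 = \lfloor \sqrt{k}\rfloor \geq 1$, so from $\sqrt{k} = [a_0; a_1, a_2, \dots]$ one gets $1/\sqrt{k} = [0; a_0, a_1, a_2, \dots]$; reading off the numerator/denominator recurrences, the convergents of $1/\sqrt{k}$ are exactly $0/1$ together with the reciprocals $q_n/p_n$ of the convergents $p_n/q_n$ of $\sqrt{k}$. Since $q \geq 1$ excludes $q/p = 0/1$, we conclude $q/p = q_n/p_n$, i.e.\ $p/q = p_n/q_n$ is a convergent of $\sqrt{k}$. The points I would watch most carefully are that nothing in the first case secretly used $k \in \ZZ$, that the description of the convergents of $1/\sqrt{k}$ (prepending the partial quotient $0$ and shifting indices) is correct, and that the coprimality reduction makes the final identification $q/p = q_n/p_n$ an equality of fractions already in lowest terms.
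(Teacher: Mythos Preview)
Your argument is correct. The reduction to $\gcd(p,q)=1$, the factorization $|p^2-kq^2|=|p-q\sqrt{k}|(p+q\sqrt{k})$, the direct application of Legendre's criterion when $p>q\sqrt{k}$, and the reciprocal trick in the case $p<q\sqrt{k}$ (applying the same estimate to the pair $(q,p)$ and the radicand $1/k$, then reading off the convergents of $1/\sqrt{k}=[0;a_0,a_1,\ldots]$) all go through as you describe. The only point worth spelling out fully is that in Case~2 you are invoking Legendre's criterion for the irrational $1/\sqrt{k}$, not re-invoking the lemma itself, so there is no circularity; you have this right, but it is the step a reader is most likely to stumble on.

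As for comparison with the paper: the paper does not prove the lemma at all. It simply cites Theorem~7.24 of Niven--Zuckerman--Montgomery. So your proposal is not an alternative to the paper's argument but rather a self-contained proof where the paper defers to the literature. Your route via Legendre plus the reciprocal symmetry is a clean way to establish the result from first principles, and it has the minor bonus of making transparent exactly which properties of $k$ are used (integrality of $k$ enters only to ensure $\sqrt{k}>1$ so that the continued fraction of $1/\sqrt{k}$ begins with $0$).
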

\begin{proof}
See \cite{NZM}, Theorem 7.24.
\end{proof}

A \emph{semiconvergent} of a continued fraction $[c_0, c_1, \ldots]$ is an approximation $[c_0, \ldots, c_{n-1}, b_n]$ where $0 \leq b_n \leq c_n$. By various measures the semiconvergents are the next best approximations after the convergents (see \cite{NZM}, Exercise 7.5). The following theorem can be proved analogously to the preceding; but since it will be deduced from the methods in the next section, the proof is left as an exercise for the interested reader.

\begin{thm} \label{thm:semi}
If $d$ is one of the two nearest integers to $\sqrt{k}$ (that is, $d = \lfloor k \rfloor$ or $d = \lceil k \rceil$) and $R \in \ZZ$, then the iterates of $f$ on $\infty$ are all semiconvergents of the continued fraction for $\sqrt{k}$.
\end{thm}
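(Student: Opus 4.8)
The plan is to follow the three-step scheme of the proof of Theorem~\ref{thm:conv}, with the threshold $\sqrt{k}$ relaxed to $2\sqrt{k}$. Its first step---that every iterate, in lowest terms $p/q$, satisfies $|p^{2}-kq^{2}|\le|k-d^{2}|$---used only that $R\in\ZZ$ and that $d$ is a positive integer, never that $d$ is the \emph{nearest} integer to $\sqrt{k}$, so it carries over verbatim: the odd iterates still have $p^{2}-kq^{2}=d^{2}-k$ exactly, the even ones $p^{2}-kq^{2}=s^{2}\le|k-d^{2}|$, and reducing to lowest terms only shrinks $|p^{2}-kq^{2}|$.

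For the analogue of the second step I would show $|k-d^{2}|<2\sqrt{k}$. Since $d$ is one of the two integers nearest $\sqrt{k}$, $|d-\sqrt{k}|<1$, so $|k-d^{2}|=|d-\sqrt{k}|(d+\sqrt{k})<d+\sqrt{k}$, and a one-line case check finishes it: if $d=\lfloor\sqrt{k}\rfloor$ then $k-d^{2}\le 2d<2\sqrt{k}$, while if $d=\lceil\sqrt{k}\rceil$ then $d^{2}-k\le 2d-2<2\sqrt{k}$. Hence every iterate, in lowest terms $p/q$, satisfies $|p^{2}-kq^{2}|<2\sqrt{k}$.

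The content is in the third step, the semiconvergent counterpart of Lemma~\ref{lem:conv}: \emph{if $p,q$ are positive integers with $|p^{2}-kq^{2}|<2\sqrt{k}$, then $p/q$ is a convergent or semiconvergent of $\sqrt{k}$.} I would argue thus. On each side of $\sqrt{k}$ the semiconvergents form a strictly monotone sequence converging to $\sqrt{k}$, and two consecutive ones are always Farey neighbours (the relevant $2\times2$ determinant is $\pm1$); so if $p/q$ (in lowest terms, $q\ge2$) is not a convergent or semiconvergent it lies strictly between two consecutive semiconvergents on its side, whence by the Farey bound $q$ is at least the sum of their denominators, and in particular $q>q'$ for the smaller bracketing denominator $q'\le q-1$, giving $|\sqrt{k}-p/q|>\tfrac1{qq'}\ge\tfrac1{q(q-1)}$. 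But from $|p^{2}-kq^{2}|<2\sqrt{k}$, writing $\delta=|q\sqrt{k}-p|$, we get $\delta(p+q\sqrt{k})=|p^{2}-kq^{2}|<2\sqrt{k}$ and $p+q\sqrt{k}>q\sqrt{k}$, so $\delta<2/q$, hence $p+q\sqrt{k}>2q\sqrt{k}-2/q$ and
\[
\left|\sqrt{k}-\frac pq\right|=\frac{|p^{2}-kq^{2}|}{q(p+q\sqrt{k})}<\frac{2\sqrt{k}}{2q^{2}\sqrt{k}-2}=\frac{\sqrt{k}}{q^{2}\sqrt{k}-1}\le\frac1{q(q-1)},
\]
the last inequality because $q\sqrt{k}\ge1$; this contradicts the previous bound. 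The finitely many $q=1$ cases are checked directly (the only positive integers within $2\sqrt{k}$ of $\sqrt{k}$ are $\lfloor\sqrt{k}\rfloor$, $\lceil\sqrt{k}\rceil$, and sometimes $\lfloor\sqrt{k}\rfloor-1$, all semiconvergents), and in any case the sole iterate with $q=1$ is $d$ itself.

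I expect the third step to be the main obstacle: it genuinely sharpens Lemma~\ref{lem:conv}, and the range $p/q<\sqrt{k}$ is precisely why the reduction has to go to $\tfrac1{q(q-1)}$ rather than the cruder $\tfrac1{q^{2}}$. Two cleaner routes are worth noting. As the paper remarks, the theorem drops out of the concatenation description of the continued fraction of $\sqrt{k}$ (Theorem~\ref{thm:patterns}), which in addition gives the analogue for general real quadratic integers. Alternatively one can bypass the Farey refinement: the identity $f(x)-\sqrt{k}=\dfrac{(d-\sqrt{k})(x-\sqrt{k})}{x+d}$ shows that for $d=\lfloor\sqrt{k}\rfloor$ the orbit alternates about $\sqrt{k}$ with infimum $d$, and for $d=\lceil\sqrt{k}\rceil$ it decreases monotonically to $\sqrt{k}$, so in either case every iterate $x=p/q$ has $x+\sqrt{k}>|k-d^{2}|$; since $|p^{2}-kq^{2}|\le|k-d^{2}|$ by the first step, $|x-\sqrt{k}|=|p^{2}-kq^{2}|/\bigl(q^{2}(x+\sqrt{k})\bigr)<1/q^{2}$, and the classical ``$1/q^{2}$ implies convergent or semiconvergent'' criterion applies.
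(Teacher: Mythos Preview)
Your argument is correct and carries out precisely the route the paper alludes to but declines to write out: the paper says Theorem~\ref{thm:semi} ``can be proved analogously to the preceding'' (i.e., by relaxing the threshold in the three-step scheme of Theorem~\ref{thm:conv}) yet defers the actual proof to Section~\ref{sec:corollaries}, where it is obtained as Theorem~\ref{thm:semi2} from the pattern continued fraction of Theorem~\ref{thm:patterns}. Your direct approach is more elementary---Step~(1) verbatim, the easy bound $|k-d^2|<2\sqrt{k}$, and a Farey/mediant argument giving the semiconvergent analogue of Lemma~\ref{lem:conv}---while the paper's route requires the whole of Theorem~\ref{thm:patterns} but in exchange works uniformly for any real quadratic integer $\xi$ with $|\bar\xi|<1$ and pinpoints exactly where in the continued fraction each iterate lands. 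One small correction to your Farey step: the inequality $|\sqrt{k}-p/q|>1/(qq')$ follows from comparing $p/q$ with the \emph{closer} bracketing semiconvergent, whose denominator is the \emph{larger} of the two, not the smaller; since both bracketing denominators are at most $q-1$ once $q\ge 2$, your conclusion $>1/(q(q-1))$ is unaffected. Finally, your ``alternative route'' via $|x-\sqrt{k}|<1/q^{2}$ does not really bypass the Farey refinement, since the ``$1/q^{2}$ implies semiconvergent'' criterion you invoke is established by that same mediant argument.
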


\section{Patterns in the continued fraction}
\label{sec:patterns}

We now proceed to compute the continued fraction explicitly. We begin by parametrizing the admissible values of $k$ and $d$.
\begin{prop}
\label{prop:parametrize}
If $k$ and $d$ are positive integers such that $R \in \ZZ$, then there are positive integers $s$, $v$, and $m$ such that
\[
  d = \frac{svm}{2} \textand k = \frac{s^2 v (vm^2 + 4\epsilon)}{4},
\]
where $\epsilon = \sgn (k-d^2) = \pm 1$.
\end{prop}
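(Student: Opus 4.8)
The plan is to reduce the whole statement to an elementary fact about squarefree parts of integers. Set $\epsilon = \sgn(k - d^2) \in \{+1,-1\}$ and $S = |k - d^2|$; this is a positive integer since $k$ is a non-square. The hypothesis $R \in \ZZ$ says precisely that $S$ divides $4d^2$, so $|R| := 4d^2/S$ is also a positive integer, and the product
\[
  S \cdot |R| = 4 d^2
\]
is a perfect square. This last observation is the only place the hypothesis is really used, and it is the crux of the argument.

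Next I would exploit that $S|R|$ is a perfect square to conclude that, for every prime $p$, the $p$-adic valuations of $S$ and of $|R|$ have the same parity. Hence $S$ and $|R|$ have the \emph{same} squarefree part; call it $v$ (the smallest positive integer with both $S/v$ and $|R|/v$ perfect squares). Then I set
\[
  s = \sqrt{S/v} \textand m = \sqrt{|R|/v},
\]
which are positive integers by construction, and $v$ is itself a positive integer. This produces the three parameters; everything after this is bookkeeping.

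To finish, I verify the two displayed formulas. For the first, $svm = \sqrt{(S/v)\,v^2\,(|R|/v)} = \sqrt{S|R|} = \sqrt{4d^2} = 2d$, so $d = svm/2$. For the second, note $k - d^2 = \epsilon S$ and $d^2 = S|R|/4 = (s^2 v)(v m^2)/4$, whence
\[
  k = d^2 + \epsilon S = \frac{s^2 v \cdot v m^2}{4} + \epsilon\, s^2 v = \frac{s^2 v\,(v m^2 + 4\epsilon)}{4};
\]
positivity of the factor $v m^2 + 4\epsilon$ is automatic since $k$, $s$, and $v$ are positive. I expect no genuine obstacle here: the one point deserving a careful sentence is that $R \in \ZZ$ forces $S \mid 4d^2$, so that $|R|$ is an honest integer and $S|R|$ is a perfect square — the parity-of-valuations step, and hence the common squarefree part $v$, rests entirely on that.
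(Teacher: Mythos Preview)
Your proof is correct and follows essentially the same route as the paper: both observe that $S\cdot|R|=4d^2$ is a perfect square, extract a common factor $v$ so that $S/v$ and $|R|/v$ are squares, and take their square roots to define $s$ and $m$. The only difference is that the paper sets $v=\gcd(|R|,S)$ whereas you take $v$ to be their common squarefree part; these choices may produce different (but equally valid) triples $(s,v,m)$, with your $v$ always dividing the paper's.
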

\begin{proof}
Let $v = \gcd(R, k-d^2)$. Note that $|R|/v$ and $|k-d^2|/v$ are two relatively prime positive integers whose product is $4d^2/v^2$, a square. Therefore $|R|/v = m^2$ and $|k-d^2|/v = s^2$ for positive integers $m$ and $s$. Then $ms = 4d/v$, giving us $d = svm/2$ and
\[
  k = d^2 + \epsilon s^2v = \frac{s^2 v^2 m^2}{4} + \epsilon s^2 v = \frac{s^2 v (vm^2 + 4\epsilon)}{4}. \qedhere
\]
\end{proof}

In the following we will be less interested in $\sqrt{k}$ itself as in the number $\xi = d + \sqrt{k}$, an algebraic integer satisfying the equation
\[
  \xi^2 - svm \xi - \epsilon s^2 v = 0
\]
which is a fixed point of the LFT
\[
  f_\xi(x) = f(x - d) + d = 2d + \frac{k - d^2}{x} = svm - \frac{\epsilon s^2 v}{x}.
\]
As long as $d$ is an integer, this is a harmless shift of $f$; when $s$, $v$, and $m$ are all odd, we have made a slight generalization (for instance, $s=v=m=1$ yields the LFT $f_\xi(x) = 1 + 1/x$ fixing the golden ratio $\xi = \frac{1+\sqrt{5}}{2}$). The only $(s,v,m,\epsilon)$ quadruples we have to exclude are those where $\xi$ is rational or non-real, which happens only in the case that $\epsilon = -1$ and $vm^2 \leq 4$.

It will be useful to introduce the notation
\[
  \de = \begin{cases} 0 & \text{if $\epsilon = 1$} \\ 1 & \text{if $\epsilon = -1$.} \end{cases}
\]

We now introduce the quantities in terms of which we will express the continued fraction.

\begin{prop}\label{prop:a}
Let
\[
  v_n = \begin{cases} v & \text{if $n$ is even} \\ 1 & \text{if $n$ is odd} \end{cases}
\]
and consider the sequence $\{a_n\}$ of integers defined by
\[
  a_0 = 0, \quad a_1 = 1, \quad a_{n+1} = v_n m a_n + \epsilon a_{n-1}.
\]
Then
\begin{enumerate}[$(a)$]
  \item $\gcd(a_n,a_{n+1}) = 1;$
  \item $f_\xi^n(\infty) = s v_{n+1} a_{n+1}/a_n$ (thus $a_n > 0$ for $n \geq 1$).
\end{enumerate}
\end{prop}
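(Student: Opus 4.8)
The plan is to prove (a) by a one‑line descent on the recurrence, to prove (b) by induction on $n$ using the explicit form of $f_\xi$, and then to read off the positivity of the $a_n$ from (b) together with an elementary fact about the dynamics of $f_\xi$ on the positive reals. For (a): reducing the recurrence modulo $a_n$ gives $a_{n+1}\equiv\epsilon a_{n-1}\pmod{a_n}$, and since $\epsilon=\pm1$ this shows $\gcd(a_{n+1},a_n)=\gcd(\epsilon a_{n-1},a_n)=\gcd(a_{n-1},a_n)$ for every $n\ge1$; hence all of these gcd's equal $\gcd(a_1,a_0)=\gcd(1,0)=1$.

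For (b) I induct on $n$, interpreting the asserted equality as an identity of points of $\RR\PP^1$. The cases $n=0$ and $n=1$ are checked directly: $f_\xi^0(\infty)=\infty=s v_1a_1/a_0=s/0$ (since $v_1=1$, $a_1=1$, $a_0=0$), and $f_\xi^1(\infty)=f_\xi(\infty)=svm=s v_2a_2/a_1$ (since $v_2=v$, $a_2=m$, $a_1=1$). For $n\ge1$, assume $f_\xi^n(\infty)=s v_{n+1}a_{n+1}/a_n$ and apply $f_\xi(x)=2d+(k-d^2)/x=svm+\epsilon s^2 v/x$:
\[
  f_\xi^{n+1}(\infty)=svm+\frac{\epsilon s v\,a_n}{v_{n+1}a_{n+1}}
   =\frac{s v\bigl(m v_{n+1}a_{n+1}+\epsilon a_n\bigr)}{v_{n+1}a_{n+1}}
   =\frac{s v\,a_{n+2}}{v_{n+1}a_{n+1}},
\]
where the last equality is the defining recurrence with index $n+1$. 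Now $v_{n+1}v_{n+2}=v$ for every $n$, because exactly one of the two factors is $v$ and the other is $1$; hence $s v/v_{n+1}=s v_{n+2}$ and $f_\xi^{n+1}(\infty)=s v_{n+2}a_{n+2}/a_{n+1}$, completing the induction.

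It remains to see that $a_n>0$ for $n\ge1$. When $\epsilon=1$ this is immediate: $a_{n+1}=v_nm a_n+a_{n-1}$ is a sum of nonnegative terms, so $a_1=1>0$ and $a_2=m>0$ propagate to all $n\ge1$. When $\epsilon=-1$ I argue dynamically. The Möbius transformation $f_\xi$ is increasing on $(0,\infty)$ — its matrix $\begin{bmatrix}svm&-s^2v\\1&0\end{bmatrix}$ has positive determinant and its only pole is at $x=0$ — it fixes $\xi=d+\sqrt k$, and $\xi\in(0,2d)$ since $\epsilon=-1$ forces $k<d^2$; moreover $f_\xi(2d)=2d+(k-d^2)/(2d)<2d$. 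Monotonicity then shows that $f_\xi$ carries $[\xi,2d]$ into itself, and since $f_\xi(\infty)=svm=2d$ lies in this interval, every iterate $f_\xi^n(\infty)$ with $n\ge1$ remains in $[\xi,2d]\subseteq(0,\infty)$. By (b) this forces $s v_{n+1}a_{n+1}/a_n>0$, so consecutive $a_n$'s have the same sign; as $a_1=1>0$, we conclude $a_n>0$ for all $n\ge1$.

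The two inductions are essentially mechanical, the only real points of care being the bookkeeping identity $v_{n+1}v_{n+2}=v$ in (b) and the positivity in the case $\epsilon=-1$. For the latter, pushing the recurrence $a_{n+1}=v_nm a_n-a_{n-1}$ directly is awkward (since $m$ may equal $1$, so the coefficients need not all be $\ge2$), and it is cleanest to exploit the forward‑invariance of the interval $[\xi,2d]$ under $f_\xi$, which is available because $vm^2>4$ in all non‑excluded cases and hence $\xi$ is a genuine real number with $0<\xi<2d$.
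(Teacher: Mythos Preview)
Your proof is correct and follows the same inductive skeleton the paper gestures at with ``Simple inductions.'' The gcd descent in (a) and the step in (b) using the bookkeeping identity $v_{n+1}v_{n+2}=v$ are exactly what is needed. Your treatment of positivity in the $\epsilon=-1$ case via forward-invariance of $[\xi,2d]$ under $f_\xi$ is a clean way to handle a step the paper does not spell out; a direct recurrence argument is awkward here (as you note, $m=1$ breaks naive monotonicity), and the alternative the paper implicitly has available is the closed form $a_n=\bigl((d+\sqrt{k})^n-(d-\sqrt{k})^n\bigr)\big/\bigl(2s^{n-1}v^{\lfloor n/2\rfloor}\sqrt{k}\bigr)$, from which positivity is immediate since $d+\sqrt{k}>|d-\sqrt{k}|$.
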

\begin{proof}
Simple inductions.
\end{proof}

It is to be noted that the sequence $a_n$ is easily computable using either its defining recursion or the explicit formula
\[
  a_n = \frac{(d+\sqrt{k})^n - (d-\sqrt{k})^n}{2s^{n-1} v^{\left\lfloor \frac{n}{2} \right\rfloor}\sqrt{k}}.
\]

The essence of the following theorem is that the continued fraction expansion of $\xi$ consists of a string of ``packets,'' each of which corresponds to the reduction of $-a_{n-1}/a_n = -s v_{n+1} / f_\xi^n(\infty)$ modulo $s$. But since $s$ and $a_n$ may share factors, we must instead reduce $[-a_{n-1}:a_n]$ to a point of the projective line $\PP^1(\ZZ/s\ZZ)$, which can be specified by two numbers: a divisor $s_n$ of $s$ and a congruence class $m_n$ modulo $s_n$ such that the equality of points
\[
  [-a_{n-1}:a_n] = [m_n:s_n]
\]
holds in $\PP^1(\ZZ/s\ZZ)$. We note that the sequence $a_n$ can be extended to negative $n$; in particular, it is purely periodic to any finite modulus.

One more remark is in order before the theorem is stated. As is well known, any rational number has two finite simple continued fraction expansions (allowing for a nonpositive initial term), because
\[
  [c_0, \ldots, c_{n-1}, c_n, 1] = [c_0, \ldots, c_{n-1}, c_n + 1].
\]
Their lengths differ by exactly $1$. For most applications, the shorter expansion is preferred; but here we find it necessary to select one or the other based on the \emph{parity} of their lengths.

\begin{thm} \label{thm:patterns}
Let $s$, $v$, and $m$ be positive integers, and let $\epsilon = \pm 1$. Define $d$, $k$, and $\xi = d + \sqrt{k}$ according to the formulas in Proposition \ref{prop:parametrize}, and let the sequences $\{v_n\}$ and $\{a_n\}$ be as in Proposition \ref{prop:a}. Furthermore define
\begin{align*}
  s_n &= \gcd(a_n,s) \\
  m_n &= \left( -\frac{a_{n-1}}{(a_n/s_n)} \right) \bmod \frac{s}{s_n},
\end{align*}
where the last equation means to perform the division mod $s/s_n$ (which is possible since $\gcd(a_n/s_n,$
$s/s_n) = 1$) and express the result as an integer $m_n$, with $0 \leq m_n < s/s_n$. Let
\[
  \xih_n = \frac{s_n v_n m - \epsilon m_n}{s/s_n}.
\]
Then:
\begin{itemize}
  \item If $\epsilon = 1$, then $\xi$ has a continued fraction expansion formed by concatenating those of $\xih_0, \xih_1, \xih_2,\ldots,$ when these are chosen to have an odd number of terms.
  \item If $\epsilon = -1$, then $\xi$ has a continued fraction expansion formed by concatenating those of $\xih_0, \xih_1-1, \xih_2-1, \xih_3-1, \ldots,$ when these are chosen to have an even number of terms.
  \item In either case, the convergent formed by the first $n$ of these finite continued fractions is the approximant $f_\xi^n(\infty)$.
\end{itemize}
\end{thm}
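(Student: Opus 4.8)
The plan is to translate the theorem into matrix language. Write $E(c)=\begin{pmatrix}c&1\\1&0\end{pmatrix}$, so that a finite continued fraction $[c_0,\dots,c_r]$ is the product $E(c_0)\cdots E(c_r)=\begin{pmatrix}p_r&p_{r-1}\\q_r&q_{r-1}\end{pmatrix}$, concatenation of continued fractions is multiplication of these products, and the determinant $(-1)^{r+1}$ records the parity of the number of terms. The theorem's two parity prescriptions are then exactly the requirement that each ``packet matrix'' $C_n$ — the matrix of the chosen finite continued fraction of $\widehat{\xi}_n$, resp.\ of $\widehat{\xi}_n-1$ — have determinant $-\epsilon$. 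The theorem reduces to: the partial products $M_n:=C_0C_1\cdots C_{n-1}$ satisfy $M_n\cdot\infty=f_\xi^n(\infty)$ and $M_n\cdot\infty\to\xi$, together with the purely combinatorial fact that the juxtaposition of the packet continued fractions normalizes — via $[\dots,a,0,b,\dots]=[\dots,a+b,\dots]$ and the usual identities handling any non-positive interior term — to a bona-fide continued fraction of $\xi$. Since $f_\xi^n(\infty)\to\xi$ is immediate from Lemma \ref{lem:AB} and the attractivity of $\sqrt k$, what must really be proved is $M_n\cdot\infty=f_\xi^n(\infty)$ together with the exact description of the $C_n$.

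For the packets I would run a ``continued fraction inside a continued fraction'': build the expansion of $\xi$ one packet at a time while carrying along a $2\times2$ integer remainder matrix. Let $E'_n=\begin{pmatrix}v_nm&\epsilon\\1&0\end{pmatrix}$ be the matrix advancing $(a_n,a_{n-1})$ to $(a_{n+1},a_n)$ for the sequence of Proposition \ref{prop:a}. The assertion to prove by induction on $n$ is that the remainder after $n$ packets is $\Gamma_n=\begin{pmatrix}s_n&-\epsilon m_n\\0&s/s_n\end{pmatrix}$, with $\Gamma_0=\mathrm{diag}(s,1)$ (forced by $a_0=0\Rightarrow s_0=s,\ m_0=0$), and that the recursion reads
\[
  \Gamma_n\,E'_n \;=\; C_n\,\Gamma_{n+1}.
\]
That $\Gamma_n$ is upper triangular with $\det\Gamma_n=s$ is automatic: reading the total product $\Gamma_0E'_0\cdots E'_{n-1}=M_n\Gamma_n$ two ways shows $\Gamma_n$ fixes $\infty$, and a determinant count (each $E'_n$ and each $C_n$ has determinant $-\epsilon$, and $\det\Gamma_0=s$) gives $\det\Gamma_n=s$; so the content is that the entries are exactly $s_n=\gcd(a_n,s)$ and $-\epsilon m_n$. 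When $\epsilon=-1$ the $E'_n$ are not continued-fraction matrices but ``minus'' ones, and one first rewrites $\begin{pmatrix}a&-1\\1&0\end{pmatrix}=E(a-1)E(1)T$ with $T=\begin{pmatrix}1&-1\\0&1\end{pmatrix}$; the unipotent factors $T$ thread through the computation (using $TE(c)=E(c-1)$) and produce exactly the replacement of $\widehat{\xi}_n$ by $\widehat{\xi}_n-1$ for $n\ge1$ and the even-length prescription. Modulo this the scheme is uniform.

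The inductive step is where essentially all the work lies, and is the step I expect to be the main obstacle. Granting $\Gamma_n$ of the stated form, multiplying out shows the first column of $\Gamma_nE'_n$ is $(s_nv_nm-\epsilon m_n,\ s/s_n)$, whose reduced ratio is $\widehat{\xi}_n$ (shifted to $\widehat{\xi}_n-1$ by the $T$ when $\epsilon=-1,\ n\ge1$); so the prescribed finite continued fraction of that number is the packet $C_n$, read off from the first column. One must then check that peeling $C_n$ off of $\Gamma_nE'_n$ leaves a residual that is again a $\Gamma_{n+1}$ of the same shape: its triangularity and determinant $s$ are automatic as above, its diagonal must be shown to be $(s_{n+1},s/s_{n+1})$ with $s_{n+1}=\gcd(a_{n+1},s)$ (this is essentially the content of the greatest common divisor of the first column of $\Gamma_nE'_n$), and its off-diagonal entry must be identified as $-\epsilon m_{n+1}$. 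This is where the recursion $a_{n+1}=v_nma_n+\epsilon a_{n-1}$, the coprimality $\gcd(a_n,a_{n+1})=1$ from Proposition \ref{prop:a}, and the defining congruence $m_{n+1}\equiv -a_n\big/(a_{n+1}/s_{n+1})\pmod{s/s_{n+1}}$ all get used: the determinant relation $p'_rq'_{r-1}-p'_{r-1}q'_r=\pm1$ between the last two convergents of the packet expresses the residual's off-diagonal entry as a specific rational-arithmetic combination of $s_n,m_n,v_nm$, which then has to be matched to $-\epsilon m_{n+1}$ by a short congruence computation modulo $s/s_{n+1}$. Crucially, the parity of the packet is not free: exactly one of the two continued fractions of $\widehat{\xi}_n$ makes the residual upper triangular with $\det C_n=-\epsilon$, which is precisely why the theorem must select the expansion by parity. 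The base case is the identity $\mathrm{diag}(s,1)E(v_0m)=E(svm)\,\mathrm{diag}(1,s)$, giving $\widehat{\xi}_0=svm$, $C_0=E(svm)$, $\Gamma_1=\mathrm{diag}(1,s)$ (with the $\epsilon=-1$ analogue obtained via the $T$-factorization).

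Finally I would assemble the pieces. Telescoping the recursion gives $\Gamma_0\,E'_0E'_1\cdots E'_{n-1}=M_n\,\Gamma_n$; since $\Gamma_n$ fixes $\infty$ and, by Proposition \ref{prop:a} together with $\Gamma_0=\mathrm{diag}(s,1)$, the left side sends $\infty$ to $sv_{n+1}a_{n+1}/a_n=f_\xi^n(\infty)$, we obtain $M_n\cdot\infty=f_\xi^n(\infty)$ (automatically in lowest terms, $M_n$ being a product of $E(c)$'s) — the third bullet of the theorem — and, letting $n\to\infty$, that the $M_n$ are the partial products of a continued fraction whose value is $\lim f_\xi^n(\infty)=\xi$. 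The remaining loose ends are the size and validity checks — that the concatenation of the packets normalizes to a legitimate continued fraction (absorbing the interior $0$'s, and checking that each packet opens with a complete quotient exceeding $1$), using $0\le m_n<s/s_n$ and $v_nm\ge1$ — and the routine bookkeeping of the period-$2$ alternation of $v_n$ and the scalar factors $v^{\lfloor n/2\rfloor}$ that intervene when one relates the $E'_n$-recursion to powers of the matrix $\begin{pmatrix}2d&k-d^2\\1&0\end{pmatrix}$ of $f_\xi$; these scalars are exactly what the alternating sequence $v_n$ and Proposition \ref{prop:a} are designed to absorb.
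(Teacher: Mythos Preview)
Your approach is correct and genuinely different in presentation from the paper's, though the two converge on the same core computation.

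The paper works with the irrational \emph{complete quotients}
\[
  \xi_n \;=\; \xih_n + \frac{\bar\xi\, s_n^2 v_n}{s^2 v}
\]
and proves directly (via the concatenation Lemma~\ref{lem:concat}) that $\xi_n - \de = [P_n,\xi_{n+1}-\de]$; this is a single calculation, uniform in $\epsilon$, that lands on exactly the congruence $h/q \equiv \epsilon m_{n+1}s_{n+1}/s + \de \pmod 1$ you would need for your off-diagonal entry. You instead carry an integer ``remainder matrix'' $\Gamma_n$ and factor $\Gamma_n E'_n = C_n\Gamma_{n+1}$. These are equivalent: your $\Gamma_n$ is the matrix of the LFT $\xi\mapsto\xi_n$ (up to the $v_n$-scaling you flag at the end), and your matrix identity is the paper's recursion rewritten in $\GLQ$.

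What each buys: your telescoping $\Gamma_0 E'_0\cdots E'_{n-1}=M_n\Gamma_n$ gives part~(c) immediately, whereas the paper proves (c) separately via a cross-ratio argument (Lemma following the main inductive lemma). Conversely, the paper's use of $\de$ handles $\epsilon=\pm1$ uniformly; your $T$-factorization for $\epsilon=-1$ is correct (both identities check), but threading the $T$'s through is genuine bookkeeping that the paper avoids.

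Two small corrections. First, your closing ``size and validity checks'' (normalizing zeros, packets opening with quotient ${}>1$) are \emph{not} part of this theorem: the claim is only that the concatenation is \emph{a} continued fraction expansion of $\xi$, possibly with nonpositive interior terms; simplicity is handled later in Theorems~\ref{thm:conv2} and~\ref{thm:semi2}. Second, convergence of the pattern continued fraction needs slightly more than $M_n\cdot\infty\to\xi$, since that only controls a subsequence of partial quotients; the paper closes this by observing that the $\xih_n$ (hence the packets) are periodic in $n$, so every convergent lies in one of finitely many $f_\xi^\ell$-orbits, each converging to $\xi$. You should add that observation.
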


To avoid confusion, we call the continued fraction expansion of $\xi$ produced by this theorem the \emph{pattern continued fraction,} to be distinguished from the simple continued fraction expansion which only coincides with it when all $\xih_n$ are at least $1$ (for $\epsilon = 1$) or greater than $2$ (for $\epsilon = -1$).

It is to be noted that the quantities $s_n$ and $m_n$, hence $\xih_n$, depend only on the sequences $\{v_n\}$ and $\{a_n\}$ mod $s$ and therefore are purely periodic. So we obtain a purely periodic continued fraction expansion for $\xi - \de$. If it happens to be simple, then by the well-known criterion for pure periodicity, the Galois conjugate $\bar{\xi}$ lies between $-1$ and $0$; appropriate converses to this will soon be proved (Theorems \ref{thm:conv2} and \ref{thm:semi2}).

We first state and prove a lemma that is useful in general when continued fractions are being concatenated.
\begin{lem}\label{lem:concat}
Let $p/q$ be a rational number in lowest terms $(q > 0)$ and let $[c_0,c_1,\ldots,c_n]$ be either of its simple continued fraction expansions, where $c_0 \in \ZZ$ and all other terms are positive. Then we have the equality of LFT's
\[
  [c_0,c_1,\ldots,c_n,x] = \frac{px + g}{qx + h}
\]
where $(g,h)$ is the unique solution to $gq - hp = (-1)^n$ satisfying $0 \leq h < q$ (if $n$ is even) or $0 < h \leq q$ (if $n$ is odd).
\end{lem}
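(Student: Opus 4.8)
The plan is to use the standard matrix encoding of finite continued fractions. For each $i \geq 0$ put $M_i = \begin{bmatrix} c_i & 1 \\ 1 & 0 \end{bmatrix}$, so that the LFT $[c_0, c_1, \ldots, c_n, x]$ is the one attached to the product $M_0 M_1 \cdots M_n \begin{bmatrix} x & 1 \\ 1 & 0 \end{bmatrix}$. Writing $p_j/q_j = [c_0, \ldots, c_j]$ for the convergents, with the usual conventions $p_{-1} = 1$ and $q_{-1} = 0$, an immediate induction gives $M_0 M_1 \cdots M_n = \begin{bmatrix} p_n & p_{n-1} \\ q_n & q_{n-1} \end{bmatrix}$. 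Multiplying on the right by $\begin{bmatrix} x & 1 \\ 1 & 0 \end{bmatrix}$ then yields the identity of LFT's $[c_0, \ldots, c_n, x] = (p_n x + p_{n-1})/(q_n x + q_{n-1})$.

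Next I would pin down $p_n$ and $q_n$. Since $c_1, \ldots, c_n$ are positive, the recursion $q_j = c_j q_{j-1} + q_{j-2}$ shows $q_j \geq 1$ for every $j \geq 0$; and $\det(M_0 \cdots M_n) = (-1)^{n+1}$ gives $p_n q_{n-1} - p_{n-1} q_n = (-1)^{n+1}$, whence $\gcd(p_n, q_n) = 1$. As $p_n/q_n = p/q$ is in lowest terms with $q > 0$, this forces $p_n = p$ and $q_n = q$ exactly, so $[c_0, \ldots, c_n, x] = (px + g)/(qx + h)$ with $g = p_{n-1}$ and $h = q_{n-1}$. The same determinant relation then reads $gq - hp = p_{n-1}q_n - q_{n-1}p_n = (-1)^n$, which is the asserted congruence.

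It remains to locate $h = q_{n-1}$ in the prescribed interval and to record uniqueness. Uniqueness is free: because $\gcd(p,q) = 1$, the integer solutions of $gq - hp = (-1)^n$ form a single residue class of $h$ modulo $q$, so each half-open interval of length $q$ contains exactly one of them. For membership I would again invoke $q_j = c_j q_{j-1} + q_{j-2}$: for $n \geq 2$ it forces $1 \leq q_{n-1} < q_n = q$ strictly (equality $q_{n-1} = q_n$ would require $c_n = 1$ and $q_{n-2} = 0$, i.e. $n = 1$), while the boundary cases are $n = 0$, where $q_{-1} = 0$, and $n = 1$, where $q_0 = 1 \leq q = c_1$ with equality iff $c_1 = 1$. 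Checking these against the parity of $n$ yields $0 \leq h < q$ for $n$ even and $0 < h \leq q$ for $n$ odd. This endpoint bookkeeping is the only delicate point; it is precisely what makes the two continued fraction expansions of $p/q$, whose lengths differ by one, correspond to the two admissible pairs $(g,h)$.
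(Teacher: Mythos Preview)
Your argument is correct. You take the standard matrix route: identify $g=p_{n-1}$, $h=q_{n-1}$ via $M_0\cdots M_n=\begin{bmatrix}p_n&p_{n-1}\\q_n&q_{n-1}\end{bmatrix}$, read off the determinant relation, and then bound $q_{n-1}$ using the recursion $q_j=c_jq_{j-1}+q_{j-2}$, treating $n=0$ and $n=1$ by hand.

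The paper proves the same lemma by a somewhat different device for the inequality. Rather than invoking the convergent recursion, it observes that the LFT $\tau(x)=[c_0,\ldots,c_n,x]$ is finite for $x>0$, for $x<-1$, and at $x=\infty$, so its unique pole $-h/q$ must lie in $[-1,0]$; this gives $0\le h\le q$ in one stroke. The endpoint ambiguity $h\in\{0,q\}$ is then shown (via the determinant relation) to force $q=1$, and only that integer case is checked explicitly. Your approach is more computational but entirely self-contained from the recursion; the paper's pole argument is a bit slicker in that it avoids the small-$n$ case split until the very end, trading it for a single analysis of $q=1$. Either way the delicate content is the same: matching the two endpoint possibilities to the two parities of $n$, which is exactly what distinguishes the two continued fraction expansions of $p/q$.
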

\begin{proof}
Let $\tau(x) = [c_0,c_1,\ldots,c_n,x]$. Since $\tau(\infty) = p/q$, we have
\[
  \tau(x) = \frac{px + g}{qx + h}
\]
for some integers $g$ and $h$; since $x$ is reciprocated $n+1$ times, the determinant $h p - g q$ is $(-1)^{n+1}$. Note that $\tau(x)$ has a finite value whenever $x$ is greater than $0$, less than $-1$, or equal to $\infty$; so the unique pole of $\tau$ lies between $-1$ and $0$ inclusive, and thus $0 \leq h \leq q$. Since the determinant condition has a unique solution mod $q$, we have determined $h$ uniquely unless $h \equiv 0$ mod $q$, which can only happen if $q = 1$ and $p/q = p$ is an integer. Here the relevant continued fractions are
\[
  [p,x] = \frac{px + 1}{x} \textand [p-1,1,x] = \frac{px+(p-1)}{x+1},
\]
with $h$ respectively taking the values $0$ and $1$. These cases can be told apart by the parity of $n$ as in the statement of the lemma.
\end{proof}

We now proceed with the proof of Theorem \ref{thm:patterns}.
\begin{lem}
Let
\[
  \xi_n = \frac{\xi s_n^2 v_n - \epsilon s s_n v m_n}{s^2 v} = \xih_n + \frac{\bar{\xi} s_n^2 v_n}{s^2 v},
\]
where $\bar{\xi} = d - \sqrt{k}$ is the $\QQ$-Galois conjugate of $\xi$. If the continued fraction expansion of $\xih_n - \de$ is $[c_0,\ldots,c_n]$, where $n \equiv \de$ mod $2$, then
\[
  \xi_n - \de = [c_0,\ldots,c_n,\xi_{n+1} - \de].
\]
\end{lem}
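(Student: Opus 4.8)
The plan is to recast the claimed identity as an equality of linear fractional transformations, and then to prove it by a matrix computation that draws on nothing beyond the relation $f_\xi(\xi)=\xi$. First, by the displayed definition of $\xi_n$, the affine LFT $U_n\colon x\mapsto\bigl(s_n^2 v_n x-\epsilon s s_n v m_n\bigr)/(s^2 v)-\de$ satisfies $U_n(\xi)=\xi_n-\de$, and likewise $U_{n+1}(\xi)=\xi_{n+1}-\de$. Second, applying Lemma~\ref{lem:concat} to the lowest-terms form $p/q$ of $\xih_n-\de$, the LFT $x\mapsto[c_0,\dots,c_n,x]$ is a matrix $M$ whose first column is the primitive integer vector $(p,q)$, whose determinant is $ph-gq=-(-1)^n$, and whose lower-right entry lies in the interval prescribed by the parity of $n$; because we have chosen $n\equiv\de\pmod 2$, this determinant is $-\epsilon$. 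In this language the lemma asserts exactly that $M\bigl(U_{n+1}(\xi)\bigr)=U_n(\xi)$ as elements of $\QQ(\sqrt{k})$.

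Let $F$ be the matrix of $f_\xi$. It suffices to show that $M$ is a nonzero scalar multiple of $N:=U_n F U_{n+1}^{-1}$: for then
\[
  M\bigl(U_{n+1}(\xi)\bigr)=N\bigl(U_{n+1}(\xi)\bigr)=U_n\bigl(F(\xi)\bigr)=U_n(\xi),
\]
since $\xi$ is the fixed point of $f_\xi$ and a scalar does not change the associated transformation. As $M$ and $N$ are both rational $2\times 2$ matrices, the uniqueness clause of Lemma~\ref{lem:concat} reduces $M\sim N$ to the following: after rescaling $N$ so that its first column becomes the primitive integer vector $(p,q)$, one must verify (i) $N(\infty)=\xih_n-\de$; (ii) the rescaled $N$ has integer entries; (iii) $\det N=-\epsilon$; and (iv) the lower-right entry of the rescaled $N$ lies in the interval Lemma~\ref{lem:concat} attaches to this parity of $n$.

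Items (i) and (iv) are quick. Since $U_{n+1}^{-1}$ and $U_n$ fix $\infty$ and $f_\xi(\infty)=svm=2d$, we have $N(\infty)=U_n(2d)$, which simplifies directly to $(s_n v_n m-\epsilon m_n)/(s/s_n)-\de=\xih_n-\de$, giving (i). For (iv), the pole of $N$ is $N^{-1}(\infty)=U_{n+1}\bigl(f_\xi^{-1}(\infty)\bigr)=U_{n+1}(0)=-\epsilon s_{n+1}m_{n+1}/s-\de$, and the defining inequality $0\le m_{n+1}<s/s_{n+1}$ places this in $[-1,0]$ with precisely the endpoint behaviour demanded for each parity. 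The determinant is multiplicative, $\det N=\det U_n\cdot\det F/\det U_{n+1}$, a definite rational multiple of $s_n^2 v_n/(s_{n+1}^2 v_{n+1})$; so (iii) comes down to checking that the rescaling required in (ii) has exactly the size that turns it into $-\epsilon$.

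I expect the genuine obstacle to be precisely items (ii) and (iii) --- keeping $N$ integral after rescaling, and getting the determinant's sign right --- both of which amount to managing the common divisors of $s$ with the $a_n$. The crucial input is the divisibility $s\mid a_n m_n+a_{n-1}s_n$, which is immediate from the congruence $(a_n/s_n)m_n\equiv -a_{n-1}\pmod{s/s_n}$ defining $m_n$; applied at indices $n$ and $n+1$ and combined with the recursion $a_{n+1}=v_n m a_n+\epsilon a_{n-1}$, it is what lets one factor the correct power of $s$ (and of $v$, which intervenes through the alternation in $v_n$) out of $U_n F U_{n+1}^{-1}$, so that the first column becomes primitive and the determinant collapses to $-\epsilon$. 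This is the step at which division in the ring $\ZZ/(s/s_n)\ZZ$ --- equivalently, the reduction of the projective point $[-a_{n-1}:a_n]$ to a point of $\PP^1(\ZZ/s\ZZ)$ --- does the real work; the surrounding manipulations are routine.
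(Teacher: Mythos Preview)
Your framing is sound and genuinely different from the paper's: you recast the identity as $M\sim N:=U_nFU_{n+1}^{-1}$ and then try to pin down $N$ via the uniqueness clause of Lemma~\ref{lem:concat}, whereas the paper simply applies $M^{-1}$ to $\xi_n=\hat\xi_n+\kappa$ (with $\kappa=\bar\xi s_n^2v_n/s^2v$) and algebraically simplifies the result to $\xi_{n+1}$. Your verifications of (i) and of the location of the pole of $N$ are correct.

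The gap is that (ii) and (iii) are not proved --- and they are the entire content of the lemma, not a routine tail. To rescale $N$ so that its first column is the primitive vector $(p,q)$ you must first know what $(p,q)$ \emph{is}: the paper shows, via $u\equiv a_{n+1}/(a_n/s_n)\pmod{s/s_n}$, that the common factor of numerator and denominator of $\hat\xi_n$ is exactly $s_{n+1}$, so that $q=s/(s_ns_{n+1})$. You never identify this; without it you cannot name the rescaling, cannot check integrality of the second column, and cannot reduce the determinant to $-\epsilon$. The divisibility $s\mid a_nm_n+a_{n-1}s_n$ you cite is one of the ingredients, but combining it with the recursion at both $n$ and $n+1$ and with the alternation $v_n\leftrightarrow v_{n+1}$ to extract the right powers of $s$ and $v$ is precisely the computation the paper carries out (in its own coordinates) over a page; it is not shorter in your coordinates. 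Note also that your check of (iv) is really ``$-h'/q$ lies in $[-1,0]$,'' which only yields $0\le h'\le q$ \emph{once you know $h'$ is an integer} --- so (iv) is not independent of (ii) as you present it.

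In short: your route can be made to work, but as written it stops exactly where the paper's argument begins. Either carry out the matrix product $U_nFU_{n+1}^{-1}$ explicitly and exhibit the rescaling by $s_n s_{n+1}/v_{n+1}$ (or whatever the correct factor turns out to be), or adopt the paper's shortcut of writing $\xi_n=p/q+\kappa$ and computing $(g-h\xi_n)/(q\xi_n-p)$ directly --- the irrational part collapses to $\xi s_{n+1}^2v_{n+1}/(s^2v)$ in one line using $\xi\bar\xi=-\epsilon s^2v$, and the rational part reduces to a congruence for $h\bmod q$ that is exactly your pole computation in disguise.
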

\begin{proof}
It is of course equivalent to prove that
\[
  \xi_n = [c'_0, c_1, \ldots, c_n,\xi_{n+1} - \de]
\]
where $[c'_0, c_1, \ldots, c_n]$ is the continued fraction expansion of $\xih_n$ itself.

First, let us calculate the greatest common divisor $w$ of the numerator and denominator of
\[
  \xih_n = \frac{s_n v_n - \epsilon m_n}{s/s_n}
\]
by expressing the congruence class of the numerator $u = s_n v_n - \epsilon m_n$ modulo $s/s_n$ in a simple way:
\begin{align*}
  u &= s_n v_n - \epsilon m_n \\
  &\equiv s_n v_n + \epsilon \frac{a_{n-1}}{(a_n/s_n)} \\
  &\equiv \frac{a_n v_n + \epsilon a_{n-1}}{a_n/s_n} \\
  &\equiv \frac{a_{n+1}}{a_n/s_n} \mod \frac{s}{s_n}.
\end{align*}
In particular, since $s_n$ and $a_{n+1}$ are relatively prime (by Proposition \ref{prop:a}(a)), we have
\[
  w = \gcd\left(u,\frac{s}{s_n}\right) = \gcd\left(a_{n+1},\frac{s}{s_n}\right) = \gcd(a_{n+1},s) = s_{n+1},
\]
so the numerator and denominator of $\xih_n$ are
\[
  p = \frac{u}{s_{n+1}} \textand q = \frac{s}{s_n s_{n+1}}.
\]
Now let the continued fraction expansion of $\xih = p/q$ be $[c'_0,c_1,\ldots,c_n]$, where $n \equiv \de$ mod $2$. Applying Lemma \ref{lem:concat}, we have
\[
  [c'_0,c_1,\ldots,c_n,x] = \frac{p x + g}{q x + h}
\]
where $g$ and $h$ are determined by the relations $g q - h p = \epsilon$ and $0 \leq h < q$ (for $\epsilon = 1$) or $0 < h \leq q$ (for $\epsilon = -1$). We would like to prove the relation
\[
  \xi_n = \frac{p\xi_{n+1} + g}{q\xi_{n+1} + h},
\]
or equivalently
\[
  \xi_{n+1} = \frac{g - h \xi_n}{q \xi_n - p}.
\]
Recall that $\xi_n = \xih_n + \kappa = p/q + \kappa$, where $\kappa = \bar{\xi}s_n^2 v_n/s^2 v$. We have
\begin{align*}
  \frac{g - h \xi_n}{q \xi_n - p}
  &= \frac{g - h\left(\frac{p}{q} + \kappa\right)}{q\left(\frac{p}{q} + \kappa\right) - p} \\
  &= \frac{g q - h p - h q \kappa}{p q + q^2 \kappa - p q} \\
  &= \frac{\epsilon - h q \kappa}{q^2 \kappa} \\
  &= \frac{\epsilon}{q^2 \kappa} - \frac{h}{q} \\
  &= \frac{\epsilon s_{n+1}^2 v}{\bar{\xi} v_n} - \frac{h}{q} \\
  &= \frac{\xi s_{n+1}^2 v_{n+1}}{s^2v} - \frac{h}{q}.
\end{align*}
Comparing this to the desired value of $\xi_{n+1} - \de$, we see that it suffices to prove that
\[
  \frac{h}{q} = \frac{\epsilon m_{n+1} s_{n+1}}{s} + \de.
\]
Since both sides lie in the interval $[0,1)$ (if $\epsilon = 1$) or $(0,1]$ (if $\epsilon = -1$), it suffices to prove that they are equal mod $1$, that is, to consider only the value of $h$ mod $q$. But $h$ mod $q$ depends only on $p$ mod $q$, which depends only on $u$ mod $s/s_n$, which we previously calculated:
\begin{align*}
  -\epsilon h &\equiv p^{-1} \mod q \\
  &\equiv \left(\frac{u}{s_{n+1}}\right)^{-1} \\
  &\equiv \left(\frac{\dfrac{a_{n+1}}{a_n/s_n} \bmod \dfrac{s}{s_n}}{s_{n+1}}\right)^{-1} \\
  &\equiv \left(\frac{a_{n+1}/s_{n+1}}{a_n/s_n}\right)^{-1} \\
  &\equiv \frac{a_n/s_n}{a_{n+1}/s_{n+1}} \mod q.
\end{align*}
Therefore
\[
  -\frac{\epsilon h}{q} \equiv \frac{\dfrac{a_n/s_n}{a_{n+1}/s_{n+1}} \bmod \dfrac{s}{s_n s_{n+1}}}{s/(s_n s_{n+1})}
  \equiv \frac{\dfrac{a_n}{a_{n+1}/s_{n+1}} \bmod \dfrac{s}{s_{n+1}}}{s/s_{n+1}} = -\frac{m_{n+1}}{s/s_{n+1}} \mod 1,
\]
as desired.
\end{proof}

We will call the continued fraction expansion of $\xih_n - \de$ appearing in the theorem the $n$th ``packet'' and denote it by $P_n$. So we have the relation
\[
  \xi - \de = \xi_0 - \de = [P_0, P_1,\ldots, P_{n-1}, \xi_n - \de]
\]
for each $n \geq 0$. We would like to deduce that $[P_0, P_1, \ldots] = \xi - \de$, but in general this is complicated by the presence of zero and negative terms at the beginnings of the packets, and hence we defer it until after proving part (c), which shows that many of its convergents are quite close to $\xi - \de$.

\begin{proof}[Proof of Theorem \ref{thm:patterns}(c)]

Let $r_n = [P_0,P_1,\ldots,P_{n-1}]$, where, for $n=0$, the empty continued fraction $[]$ is to be interpreted as $\infty$. By induction, it is enough to prove that
\[
  f_{\xi-\de}(r_n) = r_{n+1}
\]
where
\[
  f_{\xi-\de}(x) = f_{\xi}(x + \de) - \de = f(x - d + \de) + d - \de.
\]

Define the LFT's
\[
  \sigma(x) = [P_0,P_1,\ldots,P_{n-1},x]
\]
and
\[
  \tau(x) = [P_n,x].
\]
There is of course only one LFT $g$ such that
\[
  g(\sqrt{k}) = \sqrt{k}, \quad g(-\sqrt{k}) = -\sqrt{k}, \textand g(r_n) = r_{n+1};
\]
we wish to prove that $g = f_{\xi - \de}$. To this end we use the following criterion:
\begin{lem}
Let $g$ be an LFT with fixed points $p_1$ and $p_2$. For any $x \neq p_1,p_2$, the cross ratio
\[
  \lambda(g) = \cro(p_1,p_2,x,g(x))
\]
is an invariant of $g$ and (together with $p_1$ and $p_2$) determines $g$ uniquely.
\end{lem}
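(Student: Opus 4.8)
The plan is to put $g$ into its normal form with respect to its two fixed points. First I would pick an LFT $h$ with $h(p_1) = 0$ and $h(p_2) = \infty$, which exists because the group of LFT's acts transitively on ordered pairs of distinct points of $\RR\PP^1$; such an $h$ is unique up to post-composition with a scaling $z \mapsto cz$. Then $h g h^{-1}$ fixes both $0$ and $\infty$, and writing a general LFT as $z \mapsto \frac{az+b}{cz+d}$ shows immediately that fixing $0$ forces $b = 0$ while fixing $\infty$ forces $c = 0$; hence $h g h^{-1}$ is a dilation $z \mapsto \mu z$. The scalar $\mu = \mu(g)$ (the \emph{multiplier} of $g$) does not depend on the choice of $h$, since conjugating a dilation by a dilation changes nothing.

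Next I would compute the cross ratio using only its characteristic property — invariance under applying one LFT to all four arguments simultaneously. Applying $h$ gives
\[
  \lambda(g) = \cro(p_1, p_2, x, g(x)) = \cro\bigl(h(p_1), h(p_2), h(x), h(g(x))\bigr) = \cro(0, \infty, y, \mu y),
\]
where $y = h(x) \neq 0, \infty$. Scaling the last four points by $y^{-1}$ (again an LFT) turns the right-hand side into $\cro(0, \infty, 1, \mu)$, which is a quantity depending on $\mu$ alone; with a standard normalization of the cross ratio it is literally $1/\mu$, and in any case it is an injective function of $\mu$ over the admissible range. In particular $\lambda(g)$ does not depend on the auxiliary point $x$, so it is a genuine invariant of $g$.

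For the uniqueness clause I would read the displayed equation backwards: given $p_1$, $p_2$, and the number $\lambda(g)$, the injectivity of $\mu \mapsto \cro(0,\infty,1,\mu)$ recovers $\mu$, whereupon $g = h^{-1} \circ (z \mapsto \mu z) \circ h$ is completely determined. (Alternatively one can argue directly that $g$ is the unique LFT carrying the three distinct points $p_1, p_2, x$ to $p_1$, $p_2$, and the unique point $y_0$ with $\cro(p_1, p_2, x, y_0) = \lambda(g)$.) I do not expect a real obstacle here; the only points needing care are fixing a single cross-ratio convention so the map $\mu \mapsto \cro(0,\infty,1,\mu)$ is unambiguous and invertible, and observing that nothing in the argument breaks if one of $p_1, p_2$ equals $\infty$, all statements being about points of $\RR\PP^1$.
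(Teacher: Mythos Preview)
Your proof is correct and is essentially the same as the paper's: both reduce to the normal form where the fixed points are $0$ and $\infty$, so that $g$ becomes a dilation $z \mapsto \lambda z$ and the cross ratio computes to $\lambda$ (or $1/\lambda$, depending on conventions). The paper simply phrases the conjugation step as ``without loss of generality $p_1 = \infty$ and $p_2 = 0$,'' whereas you spell out the conjugating map $h$ explicitly; your version is more detailed but not substantively different.
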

\begin{proof}
Without loss of generality, $p_1 = \infty$ and $p_2 = 0$. Then $g(x) = \lambda x$ for some $\lambda$, so
\[
  \cro(p_1,p_2,x,g(x)) = \cro(\infty,0,x,\lambda x) = \lambda
\]
is an invariant of $g$ and determines $g$.
\end{proof}
So it suffices to compare $\lambda(f_{\xi-\de})$ and $\lambda(g)$. For the former we have
\begin{equation}\label{eq:lf}
  \lambda(f_{\xi-\de}) = \lambda(f) = \cro(\sqrt{k}, -\sqrt{k}, \infty, d)
\end{equation}
by picking $x = \infty$. For $g$, we pick $x = r_n$ and then apply $\sigma^{-1}$ to each of the four points of the cross ratio, noting that $r_n = \sigma(\infty)$ and $r_{n+1} = \sigma(\tau(\infty)) = \sigma(\xih_n - \de)$, to get
\begin{align}
  \lambda(g) = \cro(\sqrt{k}, -\sqrt{k}, r_n, r_{n+1})
  &= \cro(\xi_n - \de, \bar{\xi}_n - \de, \infty, \xih_n - \de)
  = \cro(\xi_n, \bar{\xi}_n, \infty, \xih_n) \label{eq:lg}
\end{align}
where $\bar{\xi}_n$ is the $\QQ$-Galois conjugate of $\xi_n$. There is now no need to compute the cross ratio explicitly, since each of the four points in \eqref{eq:lf} maps to the corresponding point of \eqref{eq:lg} under the LFT
\[
  x \mapsto \frac{x s_n^2 v_n}{s^2 v} - \frac{\epsilon m_n s_n}{s}. \qedhere
\]
\end{proof}

Finally, we complete the proof of the theorem by showing that the infinite continued fraction $[P_0,P_1,\ldots]$ converges to $\xi - \de$.

\begin{proof}[Proof of Theorem \ref{thm:patterns}(a,b)]

As was previously mentioned, the quantities $\xih_n$ vary in a purely periodic manner, so there is an $\ell > 0$ such that for all $n \geq 0$,
\[
  P_{n+\ell} = P_n.
\]
This means that the set of convergents of the pattern continued fraction is a union of finitely many orbits of the LFT
\[
  \tau(x) = [P_0,P_1,\ldots,P_{\ell-1}].
\]
From the foregoing we can see that $\tau = f_{\xi-\de}^\ell$ (the two LFT's agree on $\pm \sqrt{k}$ and $\infty$) and so the orbit $\{x, \tau(x), \tau^2(x),\ldots\}$ tends to $\xi - \de$ for any rational $x$. Accordingly, we have convergence and the identity
\[
  [P_0,P_1,\ldots] = \xi - \de. \qedhere
\]
\end{proof}
\section{More on convergents and semiconvergents}
\label{sec:corollaries}

We can now give a second proof of Theorems \ref{thm:conv} and \ref{thm:semi} based on the characterization of the continued fraction found in Theorem \ref{thm:patterns}. We begin by restating the hypotheses of these theorems in terms of our parameters $s$, $v$, $m$, and $\epsilon$.

\begin{lem} \label{prop:nice}
Let $\bar{\xi} = d - \sqrt{k}$ be the Galois conjugate of $\xi$. Then
\begin{enumerate}[$(a)$]
  \item $|\bar{\xi}| < 1$ if and only if $m \geq s + \de$.
  \item $|\bar{\xi}| < 1/2$ if and only if $m \geq 2s + \de$.
\end{enumerate}
\end{lem}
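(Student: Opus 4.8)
The plan is to deduce both statements directly from the minimal polynomial of $\xi$. By Proposition~\ref{prop:parametrize}, the numbers $\xi$ and $\bar{\xi}$ are the two roots of the upward parabola
\[
  g(x) = x^2 - svm\,x - \epsilon s^2 v ,
\]
whose vertex is at $x = svm/2 = d$. Since $\xi = d + \sqrt{k} > 0$ and $\xi\bar{\xi} = -\epsilon s^2 v$ by Vieta, the root $\bar{\xi}$ is negative when $\epsilon = 1$ and positive when $\epsilon = -1$; equivalently $\sgn \bar{\xi} = -\epsilon$, which also follows from $\sgn(k - d^2) = \epsilon$. Writing $c$ for the threshold in question ($c = 1$ in part~(a), $c = 1/2$ in part~(b)), and using that $g$ is negative precisely between its two roots, I would convert each inequality $|\bar{\xi}| < c$ into the sign of $g$ at a single point.

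Concretely, when $\epsilon = 1$ we have $-c < 0 < \xi$, so $-c$ cannot lie to the right of the larger root; hence
\[
  |\bar{\xi}| < c \iff \bar{\xi} > -c \iff g(-c) > 0 .
\]
When $\epsilon = -1$ (so $\xi$ a genuine quadratic irrational forces $vm^2 \geq 5$, the case $vm^2 \leq 4$ being excluded), first I would record that $c < \xi$: a short case check on $m$ gives $d = svm/2 > 1 \geq c$, whence $c < d < \xi$. Combined with $\bar{\xi} > 0$, this yields
\[
  |\bar{\xi}| = \bar{\xi} < c \iff g(c) < 0 .
\]

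It then remains to substitute and simplify. In part~(a), $g(-1) = 1 + sv(m-s)$ for $\epsilon = 1$ and $g(1) = 1 - sv(m-s)$ for $\epsilon = -1$; in part~(b), after clearing the denominator, $4g(-1/2) = 1 + 2sv(m-2s)$ for $\epsilon = 1$ and $4g(1/2) = 1 - 2sv(m-2s)$ for $\epsilon = -1$. Each resulting condition is a strict inequality among integers that unwinds to the claimed linear bound: for $\epsilon = 1$ (so $\de = 0$), $g(-1) > 0 \iff m \geq s$ and $4g(-1/2) > 0 \iff m \geq 2s$; for $\epsilon = -1$ (so $\de = 1$), $g(1) < 0 \iff sv(m-s) > 1 \iff m \geq s+1$ and $4g(1/2) < 0 \iff 2sv(m-2s) > 1 \iff m \geq 2s+1$.

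The hard part — really the one place where a careless off-by-one could slip in — is the integrality step $sv(m-s) > 1 \iff sv(m-s) \geq 2 \iff m \geq s+1$ in the $\epsilon = -1$ case of part~(a): this needs $sv(m-s) = 1$ to be impossible, and that value forces $(s,v,m) = (1,1,2)$, which is exactly one of the excluded quadruples ($vm^2 = 4$). In part~(b) the analogous quantity $2sv(m-2s)$ is even, so it can never equal $1$ and the issue does not arise. The auxiliary inequality $d > 1$ invoked for $\epsilon = -1$ must likewise be checked rather than assumed, since the cases $d \leq 1$ are again precisely the excluded small quadruples; beyond these two bookkeeping points the argument is routine arithmetic.
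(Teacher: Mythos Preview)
Your proof is correct and follows essentially the same approach as the paper: both reduce $|\bar\xi|<c$ to an integer inequality of the form $sv(m-s)>1$ (or its analogues) and single out the borderline case $(s,v,m,\epsilon)=(1,1,2,-1)$ as the excluded quadruple. Your packaging via the sign of the minimal polynomial $g$ at $\pm c$ is a slightly more uniform way of writing the same chain of equivalences that the paper carries out directly on $\sqrt{k}$, but the substance is identical.
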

\begin{proof}
Since $\sgn \bar{\xi} = -\epsilon$, it makes sense to prove the $\epsilon = 1$ and $\epsilon = -1$ cases separately. Here is a proof of (a) for $\epsilon = -1$:
\[
  \bar{\xi} < 1 \iff \sqrt{k} > d-1 \iff d^2 - k < 2d-1 \iff s^2 v < svm - 1 \iff m > s + \frac{1}{sv}.
\]
This is equivalent to $m \geq s + 1$ unless $s = v = 1$ and $m = 2$, an impossibility (since $k$ would equal $0$).

The other three cases are similar and are left to the reader.
\end{proof}

\begin{thm} \label{thm:conv2}
If $|\bar{\xi}| < 1/2$, then the pattern continued fraction is simple and the iterates $f_\xi^n(\infty)$ are convergents of $\xi$.
\end{thm}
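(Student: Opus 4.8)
The plan is to deduce the result from the explicit structure of the pattern continued fraction established in Theorem~\ref{thm:patterns}. The remark following that theorem says that the pattern continued fraction coincides with the genuine simple continued fraction of $\xi$ exactly when every $\xih_n$ is at least $1$ (in the case $\epsilon = 1$) or strictly greater than $2$ (in the case $\epsilon = -1$); so it suffices to verify this numerical condition and then read off the convergence statement from part~(c) of Theorem~\ref{thm:patterns}. The first move is to rephrase the hypothesis: by Lemma~\ref{prop:nice}(b), $|\bar{\xi}| < 1/2$ is equivalent to $m \geq 2s + \de$.

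The heart of the argument is then a one-line estimate. Recall from Theorem~\ref{thm:patterns} that $\xih_n = (s_n v_n m - \epsilon m_n)/(s/s_n)$, where $s_n$ is a positive divisor of $s$, we have $0 \leq m_n < s/s_n$, and $v_n \in \{1, v\}$ so that $v_n \geq 1$. When $\epsilon = 1$, so that $\de = 0$ and $m \geq 2s$, we obtain
\[
  \xih_n = \frac{s_n v_n m - m_n}{s/s_n} \geq \frac{s_n m + 1}{s/s_n} - 1 = \frac{s_n^2 m + s_n}{s} - 1 \geq \frac{m+1}{s} - 1 \geq 1 + \frac 1s > 1,
\]
while when $\epsilon = -1$, so that $\de = 1$ and $m \geq 2s + 1$, we obtain even more quickly $\xih_n \geq s_n^2 m / s \geq m/s \geq 2 + 1/s > 2$. (The degenerate first index $n = 0$, where $s_0 = s$, $v_0 = v$, and $m_0 = 0$, is covered by the same estimates.) By the quoted remark, the pattern continued fraction is therefore the simple continued fraction of $\xi$; in particular it is simple. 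Finally, Theorem~\ref{thm:patterns}(c) identifies each iterate $f_\xi^n(\infty)$ with the approximant of this continued fraction obtained by truncating it after its first $n$ packets, and truncating a simple continued fraction at any position returns one of its convergents; hence every $f_\xi^n(\infty)$ is a convergent of $\xi$. Specialising to $d = \lfloor k + 1/2 \rfloor$, for which $|\bar{\xi}| = |d - \sqrt{k}| < 1/2$, recovers Theorem~\ref{thm:conv}.

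I do not foresee a real obstacle: the whole proof is leverage applied to Theorem~\ref{thm:patterns} and the remark after it. The only points that call for attention are the split between $\epsilon = 1$ and $\epsilon = -1$ in the displayed estimate, and --- if one prefers to prove the quoted remark from scratch rather than cite it --- the check that the parity-constrained continued fraction expansion of a positive rational which is $\geq 1$ (respectively $> 2$) never begins with a zero or negative term, which forces one to examine integer values and values barely above the threshold as separate small cases.
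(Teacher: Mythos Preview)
Your proof is correct and follows essentially the same route as the paper's: translate $|\bar\xi|<1/2$ into $m\geq 2s+\de$ via Lemma~\ref{prop:nice}(b), then bound $\xih_n$ from below to force every packet's leading term to be positive, and read off the conclusion from Theorem~\ref{thm:patterns}(c). The only cosmetic difference is that the paper handles $\epsilon=\pm 1$ in a single chain of inequalities for $\xih_n-\de$ (and then notes which step is strict in each case), whereas you split into two cases; the estimates are otherwise identical in content.
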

\begin{proof}
We have the bound
\begin{align*}
  \xih_n - \de &= \frac{s_n v_n m - \epsilon m_n}{s/s_n} - \de \\
  &\geq \frac{s_n v_n m}{s/s_n} - 1 \\
  &\geq \frac{m}{s} - 1 \\
  &\geq 2 - 1 = 1.
\end{align*}
Moreover, at least one of the inequalities is strict (if $\epsilon = 1$ then $m_n < s/s_n$, and if $\epsilon = -1$ then $m/s > 2$) so $\xih_n$ exceeds $1$ and both of its continued fraction expansions have strictly positive terms. Hence the pattern continued fraction is simple and its distinguished convergents $f_\xi^n(\infty) = \de + [P_0,\ldots,P_{n-1}]$ are convergents of $\xi$.
\end{proof}

\begin{thm} \label{thm:semi2}
If $|\bar{\xi}| < 1$, then the pattern continued fraction has nonnegative terms and the iterates $f_\xi^n(\infty)$ are semiconvergents of $\xi$.
\end{thm}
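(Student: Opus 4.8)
The plan is to follow the proof of Theorem~\ref{thm:conv2}, weakening its conclusion in the single place where the estimates become weaker. First I would use Lemma~\ref{prop:nice}(a) to restate the hypothesis $|\bar\xi|<1$ as the inequality $m\ge s+\de$. Then, exactly as in the proof of Theorem~\ref{thm:conv2} but keeping only strict positivity in place of a lower bound of $1$, a short computation with
\[
  \xih_n-\de=\frac{s_n v_n m-\epsilon m_n}{s/s_n}-\de
\]
(and with $\xih_0-\de=2d-\de=svm-\de$) shows that each $\xih_n-\de$ has positive numerator and denominator, i.e.\ $\xih_n-\de>0$. Hence the continued fraction expansion of $\xih_n-\de$ has a nonnegative first term and strictly positive remaining terms, so the only term of the $n$th packet $P_n$ that can vanish is its first, which happens precisely when $0<\xih_n-\de<1$; and since $\xih_n-\de\ne 0$, every packet ends in a positive term. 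Therefore, in the flattened pattern continued fraction $[P_0,P_1,\dots]=\xi-\de$ the zeros are isolated, each flanked by positive terms, and the leading term, coming from $P_0$, is positive. This already gives the first assertion.

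For the assertion about the iterates, recall from Theorem~\ref{thm:patterns}(c) that $f_\xi^n(\infty)-\de=[P_0,\dots,P_{n-1}]$, a truncation of the pattern continued fraction at a packet boundary. Repeatedly applying the identity $[\dots,a,0,b,\dots]=[\dots,a+b,\dots]$ to the isolated interior zeros of $[P_0,P_1,\dots]$ produces a continued fraction whose first term is an integer and whose remaining terms are all $\ge 1$; by uniqueness this must be the canonical simple continued fraction $[b_0,b_1,b_2,\dots]$ of $\xi-\de$. Now compare the truncation $[P_0,\dots,P_{n-1}]$ with this normalization. If the next packet $P_n$ begins with a positive term, then all the zeros occurring in $[P_0,\dots,P_{n-1}]$ are interior to it, so normalizing the truncation yields an honest prefix $[b_0,\dots,b_t]$ --- a convergent of $\xi-\de$. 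If instead $P_n$ begins with a $0$, then the partial quotient $b_t$ of $\xi-\de$ straddling that boundary absorbs, besides the last term of $P_{n-1}$ (and possibly earlier terms across a chain of short packets), also the second term of $P_n$, which is $\ge 1$ and is absent from the truncation; so normalizing $[P_0,\dots,P_{n-1}]$ yields $[b_0,\dots,b_{t-1},b']$ with $0<b'<b_t$ --- a semiconvergent of $\xi-\de$. Either way $f_\xi^n(\infty)-\de$ is a semiconvergent of $\xi-\de$, and since $\xi$ and $\xi-\de$ differ by the integer $\de$, which merely shifts the zeroth partial quotient, $f_\xi^n(\infty)$ is a semiconvergent of $\xi$.

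The delicate point --- and the step I expect to demand the most care --- is the bookkeeping in the second paragraph when some packets are very short (length one or two): then several packet-boundary zeros can coalesce into a single partial quotient of $\xi-\de$, and I must check that truncating at a packet boundary always lands either cleanly at the end of such a merged quotient (giving a convergent) or strictly inside it on the high side (giving a genuine semiconvergent), never on a zero and never producing a last partial quotient exceeding the corresponding $b_t$. One also has to confirm that the prescribed parity of each packet's expansion --- forced by Lemma~\ref{lem:concat} in the proof of Theorem~\ref{thm:patterns} --- never causes two of the isolated zeros to become adjacent; this holds because a rational in $(0,1)$ has both of its expansions of the form $[0,c_1,\dots]$ with $c_1\ge 1$, and no packet equals $[0]$. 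These are routine but numerous case checks.
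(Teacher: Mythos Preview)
Your approach is correct and follows the same overall strategy as the paper: bound $\xih_n-\de>0$ via Lemma~\ref{prop:nice}(a), conclude that the pattern continued fraction has nonnegative terms, and then reduce to the simple continued fraction by eliminating zeros via $[\ldots,x,0,y,\ldots]=[\ldots,x+y,\ldots]$.

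The one genuine difference is in how you extract the semiconvergent conclusion. You propose a direct case analysis at each packet boundary (does $P_n$ begin with $0$ or not?), tracking how the truncation $[P_0,\ldots,P_{n-1}]$ sits inside the normalized simple continued fraction, and you correctly flag the bookkeeping hazards when several short packets chain together. This works, and your observation that zeros are isolated (since no packet equals $[0]$ and no packet ends in $0$) is exactly what makes the case analysis finite and manageable. The paper, by contrast, bypasses all of this with a single observation: the zero-deletion rule $[\ldots,x,0,y,\ldots]\mapsto[\ldots,x+y,\ldots]$ preserves the \emph{set} of semiconvergents. Once you know that, every convergent of the pattern continued fraction (in particular each $f_\xi^n(\infty)-\de$) is automatically a semiconvergent of the simplified one, with no boundary cases to check. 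Your isolated-zeros argument is still implicitly needed to guarantee that the deletion process terminates (the paper phrases this as ruling out an infinite tail $[c_0,0,c_1,0,\ldots]$, appealing to irrationality), but the semiconvergent-preservation lemma absorbs all the ``delicate bookkeeping'' you were anticipating into one clean statement.
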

\begin{proof}
We use the same method, but the bound $m \geq s + \de$ yields $\xih_n - \de > 0$ so we get a pattern continued fraction with \emph{nonnegative} terms. To obtain a simple continued fraction from this, it is necessary to eliminate the zeros using the transformation rule
\begin{equation} \label{eq:deletezero}
  [\ldots,x,0,y,\ldots] = [\ldots,x+y,\ldots].
\end{equation}
It is easy to see that this rule will compute each term of the simple continued fraction in finitely many steps unless it encounters an infinite tail of the form $[c_0,0,c_1,0,c_2,0,\ldots]$, which is impossible by the irrationality of the value of the pattern continued fraction. Moreover, it is easy to see that the two continued fractions on either side of \eqref{eq:deletezero} have the same set of semiconvergents, implying that the distinguished convergents $f_\xi^n(\infty)$ are semiconvergents of the resulting simple continued fraction.
\end{proof}

\section{Families}
In addition, we get extensive \emph{families} of continued fractions.
\begin{thm} \label{thm:family}
Fix $s$ and $\epsilon$, and let $v$ and $m$ vary within fixed congruence classes mod $s$. Then each term of the pattern continued fraction stays constant except the initial term of each packet $P_n$, which is linear in either $m$ (for odd $n$) or $vm$ (for even $n$).
\end{thm}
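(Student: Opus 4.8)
The plan is to read the assertion off the description of the pattern continued fraction in Theorem~\ref{thm:patterns}, tracking how each ingredient depends on $v$ and $m$ once $s$ and $\epsilon$ are fixed. Fix representatives $m^*$ and $v^*$ of the two congruence classes mod $s$. The first step is to observe that the residues $a_n \bmod s$ are determined by $s$, $\epsilon$, $v \bmod s$, and $m \bmod s$ alone, since $a_0 = 0$, $a_1 = 1$, and $a_{n+1} = v_n m a_n + \epsilon a_{n-1}$ with $v_n \in \{v, 1\}$. Hence $s_n = \gcd(a_n, s)$, the modulus $s/s_n$, and $m_n$ (which is computed inside $\ZZ/(s/s_n)\ZZ$ from $a_{n-1}$ and $a_n$ mod $s$) are all \emph{constant} as $v$ and $m$ vary in their classes. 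The only quantity in $\xih_n = (s_n v_n m - \epsilon m_n)/(s/s_n)$ that moves is $v_n m$, and that only for even $n$: for odd $n$ we have $v_n = 1$, so $v_n m = m$, whereas for even $n$ we have $v_n = v$, so $v_n m = vm$.

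Next I would record how $\xih_n$ moves. Putting $w_n = m$ for $n$ odd and $w_n = vm$ for $n$ even, we have $\xih_n = \frac{s_n^2}{s}\, w_n - \frac{\epsilon m_n s_n}{s}$, an affine function of $w_n$ with fixed coefficients. The key point is that $w_n$ changes only by multiples of $s$ as we pass between admissible parameter values --- for odd $n$ because $m \equiv m^* \pmod s$, and for even $n$ because $vm \equiv v^* m^* \pmod s$ --- so, since $\frac{s_n^2}{s}\cdot s = s_n^2 \in \ZZ$, the value of $\xih_n$ (and hence of $\xih_n - \de$) changes only by integers: precisely, it increases by $s_n^2$ each time $w_n$ increases by $s$.

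I would then invoke the elementary identity that adding an integer $N$ to a number with continued fraction expansion $[c_0, c_1, \ldots, c_k]$ produces $[c_0 + N, c_1, \ldots, c_k]$: only the initial term changes, and the length is unchanged. Two compatibility points deserve care, and they are where the only real work lies. First, $|P_n|$ should be constant across the family, so that the packet boundaries --- and hence the period $\ell$ after which the packets repeat --- sit in fixed positions: this holds because an integer shift preserves the lengths of both continued fraction expansions of a rational number, and because $\xih_n - \de$ is an integer for one admissible $(v,m)$ iff for all of them (it changes only by integers), so the rule in Theorem~\ref{thm:patterns} of choosing the expansion with a prescribed parity of length picks out ``the same'' expansion for every member of the family. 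Second, the non-initial terms of $P_n$ survive the shift unchanged. Combining: relative to a fixed base member with packets $P_n^*$, every packet $P_n$ of every member has the same length and the same non-initial terms as $P_n^*$, while its initial term equals that of $P_n^*$ plus $\frac{s_n^2}{s}(m - m^*)$ for $n$ odd and plus $\frac{s_n^2}{s}(vm - v^* m^*)$ for $n$ even. Concatenating the packets (and, for the first term only, reattaching the fixed constant $\de$ as in Theorem~\ref{thm:patterns}) gives exactly the claim: every term of the pattern continued fraction is constant except the initial term of each packet $P_n$, which is affine-linear in $m$ for odd $n$ and in $vm$ for even $n$, with slope $s_n^2/s$ depending only on the fixed data.

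The main obstacle is therefore the bookkeeping rather than any estimate: one must verify that the combinatorial shape of the pattern continued fraction --- the length of each packet, hence the location of the packet-initial terms, and the period $\ell$ --- is genuinely rigid across the family, so that ``the initial term of the $n$th packet'' names a single well-defined entry of the sequence for every choice of parameters. Once this rigidity is in place, the linearity is a one-line computation. A minor caveat worth stating is that one should restrict to those $(v,m)$ in the given classes for which $\xi = d + \sqrt{k}$ is real and irrational, i.e.\ exclude the finitely many cases with $\epsilon = -1$ and $vm^2 \le 4$.
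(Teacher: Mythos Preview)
Your proof is correct and follows essentially the same route as the paper's: both arguments observe that $a_n \bmod s$, and hence $s_n$ and $m_n$, depend only on the residue classes of $v$ and $m$, so that $\xih_n$ changes by an integer as the parameters vary, leaving all but the leading term of each packet fixed and making that term linear in $v_n m$. Your write-up is more explicit than the paper's on two points---the exact slope $s_n^2/s$ and the verification that packet lengths (hence packet boundaries) are rigid across the family---but these are elaborations of the same idea rather than a different method.
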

\begin{proof}
We have
\[
  \xih_n = \frac{s_n v_n m - \epsilon m_n}{s/s_n}.
\]
The values of $s_n$ and $m_n$ depend only on the $a_n$'s mod $s$, which in turn depend only on $v$ and $m$ mod $s$. Consequently the numerator is constant mod $s$, implying that the continued fraction expansion of $\xih_n$ is fixed except for the leading term, which is linear in $v_n m$ since $\xih_n$ is.
\end{proof}

\begin{cor}
Suppose $s$ and $\epsilon$ are fixed, and allow $v$ and $m$ to vary within fixed congruence classes mod $s$ such that $m \geq 2s + \de$ and
\[
  k = \frac{s^2 v (vm^2 + 4\epsilon)}{4}
\]
is an integer. Then each term of the continued fraction expansion of $\sqrt{k}$ is either constant, linear in $m$, or linear in $vm$, the last two cases occurring in alternation.
\end{cor}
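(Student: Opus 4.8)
The plan is to deduce this from Theorem \ref{thm:family}, using Lemma \ref{prop:nice}(b) and Theorem \ref{thm:conv2} to know that the pattern continued fraction is the honest simple continued fraction, and then passing from $\xi$ to $\sqrt{k}$ by a single integer shift of the leading term. First I would note that the hypothesis $m \ge 2s+\de$ is exactly the condition $|\bar\xi|<1/2$ of Lemma \ref{prop:nice}(b), so Theorem \ref{thm:conv2} gives that the pattern continued fraction of $\xi=d+\sqrt{k}$ coincides with the ordinary simple continued fraction of $\xi$. Second, $d=svm/2$ is an integer: automatically if $s$ is even, and if $s$ is odd then integrality of $k=\tfrac{s^2 v(vm^2+4\epsilon)}{4}$ forces $vm$ even. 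Hence $\sqrt{k}=\xi-d$ has simple continued fraction obtained from that of $\xi$ by subtracting $d$ from the $0$th partial quotient (note $\lfloor\xi\rfloor-d=\lfloor\sqrt{k}\rfloor$) and keeping all later partial quotients; in particular, away from its $0$th term the continued fraction of $\sqrt{k}$ has exactly the partial quotients of the pattern continued fraction of $\xi$.

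Now I would apply Theorem \ref{thm:family}: fixing $s$ and $\epsilon$ and letting $v,m$ range over fixed residues mod $s$, every partial quotient of the pattern continued fraction is constant except the initial term of each packet $P_n$, which is linear in $m$ when $n$ is odd and linear in $vm$ when $n$ is even. Our hypothesis also gives $\xih_n-\de>1$ for all $n$, so each packet is nonempty and contributes (at least) its leading term. Listing the non-constant partial quotients of the continued fraction of $\sqrt{k}$ in order: the first is the leading term of the even-indexed packet $P_0$ minus $d=svm/2$, hence still linear in $vm$; thereafter come the leading terms of $P_1,P_2,P_3,\dots$, which are linear in $m,vm,m,\dots$ respectively. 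Thus every partial quotient of the continued fraction of $\sqrt{k}$ is constant, linear in $m$, or linear in $vm$, and the two linear types alternate, as claimed.

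The one place to be careful is the treatment of the leading term: for $\epsilon=-1$ the packet $P_0$ is built from $\xih_0$ rather than from $\xih_0-\de$, and after subtracting $d$ one must check that the $0$th term really is $\lfloor\sqrt{k}\rfloor$ and that the remaining terms really form a valid simple continued fraction. Both are immediate once the pattern continued fraction is known to be simple (Theorem \ref{thm:conv2}) and $d$ is known to be an integer. Otherwise the argument is pure bookkeeping on top of Theorems \ref{thm:family} and \ref{thm:conv2} and Lemma \ref{prop:nice}, so I do not anticipate any serious obstacle.
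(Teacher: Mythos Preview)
Your proposal is correct and follows essentially the same route as the paper: invoke Theorem \ref{thm:conv2} (via $m\ge 2s+\de$) to identify the pattern continued fraction with the simple one, then apply Theorem \ref{thm:family} and subtract $d=svm/2$ from the leading term, noting this preserves linearity in $vm$. Your explicit verification that $d$ is an integer (from the integrality of $k$) is a useful detail the paper leaves implicit.
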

\begin{proof}
The condition $m \geq 2s + \de$ ensures that the pattern continued fraction is simple. Thus the only alteration needed to produce the continued fraction expansion of $\sqrt{k}$ is to subtract $d$ from the first term. Since $d = svm/2$ is linear in $vm$ (and the unaltered first term, which begins the $0$th packet, is already linear in $vm$), the linearity properties are unchanged.
\end{proof}

When $\epsilon = -1$, the packets all have even length and every second term of the continued fraction is constant. This implies that the \emph{minus continued fractions} of these $\sqrt{k}$ form families, generalizing the family in Theorem Minus of \cite{RST}, which corresponds to $s=1$ in our notation.

\section{Pellian convergents}

Our final task is to determine which Pellian convergents appear in the orbit $\{f^n(\infty)\}$. By Theorem \ref{thm:easy} there will always be at least some if $R \in \ZZ$, and it is not hard to determine for which $n$ they appear.

\begin{thm}
If $k$ and $d$ are integers in the form of Proposition \ref{prop:parametrize}, the iterate $f^n(\infty)$ is Pellian if and only if the following two conditions are satisfied:
\begin{itemize}
  \item $n$ is even or $v = 1$;
  \item $a_n$ is a multiple of $s$.
\end{itemize}
\end{thm}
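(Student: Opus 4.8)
The plan is to reduce the Pellian condition to a statement about the size of a single greatest common divisor, and then to evaluate that gcd from the recursion for $\{a_n\}$. Concretely, I would first apply Lemma \ref{lem:AB}: writing $\xi^n = A_n + B_n\sqrt{k}$ (with $A_n,B_n\in\ZZ$, legitimate since $\xi = d+\sqrt{k}\in\ZZ[\sqrt{k}]$ when $d,k\in\ZZ$), we have $f^n(\infty) = A_n/B_n$, whose lowest-terms form for $n\geq 1$ is $p/q = (A_n/g)/(B_n/g)$ with $g = \gcd(A_n,B_n)$. Multiplying $\xi^n$ by its $\QQ$-conjugate $\bar{\xi}^n = (d-\sqrt{k})^n$ gives
\[
  A_n^2 - kB_n^2 = (d^2 - k)^n = (-\epsilon s^2 v)^n,
\]
so $p^2 - kq^2 = (A_n^2 - kB_n^2)/g^2 = \pm(s^2v)^n/g^2$; since $g^2 \mid A_n^2 - kB_n^2$, the quotient $(s^2v)^n/g^2$ is a positive integer. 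Hence $f^n(\infty)$ is Pellian if and only if $g^2 = (s^2v)^n$, i.e.\ if and only if $\gcd(A_n,B_n)$ is as large as the norm permits.

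Next I would compute $g$. The closed form for $a_n$ recorded just after Proposition \ref{prop:a} gives $B_n = s^{\,n-1}v^{\lfloor n/2\rfloor}a_n$, while matching coefficients in $\xi^{n+1} = \xi\cdot\xi^n$ gives $B_{n+1} = A_n + dB_n$, hence $A_n = B_{n+1} - dB_n$. Substituting the formula for $B$, the recursion $a_{n+1} = v_n m a_n + \epsilon a_{n-1}$, and $d = svm/2$ (so that the coefficient $svm$ of $a_n$ collapses to $d$), one obtains
\[
  A_n = s^{\,n-1}v^{\lfloor n/2\rfloor}\bigl(d\,a_n + \epsilon\,c_n\,a_{n-1}\bigr),\qquad
  c_n = \begin{cases} s, & n\text{ even},\\ sv, & n\text{ odd}.\end{cases}
\]
Since $\gcd(a_{n-1},a_n) = 1$ by Proposition \ref{prop:a}(a), reducing $d\,a_n + \epsilon c_n a_{n-1}$ modulo $a_n$ gives $\gcd(d\,a_n + \epsilon c_n a_{n-1},\,a_n) = \gcd(c_n, a_n)$, and therefore
\[
  \gcd(A_n,B_n) = s^{\,n-1}v^{\lfloor n/2\rfloor}\gcd(c_n,a_n).
\]

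Finally I would impose $\gcd(A_n,B_n)^2 = (s^2v)^n$. For $n$ even, cancelling the common powers of $s$ and $v$ reduces this to $\gcd(s,a_n)^2 = s^2$, i.e.\ $s\mid a_n$; and ``$n$ even or $v=1$'' holds automatically. For $n$ odd the key extra ingredient is that $\gcd(v,a_n) = 1$: modulo $v$ the recursion annihilates the even-index step (where $v_n = v\equiv 0$), so applying it twice carries $a_{2i-1}$ to $\epsilon a_{2i-1}$, whence $a_n \equiv \epsilon^{(n-1)/2}\equiv\pm1\pmod v$. Thus $\gcd(sv,a_n) = \gcd(s,a_n)$ divides $s$, so $\gcd(A_n,B_n)^2 = (s^2v)^n$ forces $s^2v\mid s^2$, hence $v = 1$, and then $\gcd(s,a_n)^2 = s^2$, i.e.\ $s\mid a_n$; conversely $v=1$ together with $s\mid a_n$ gives equality. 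Combining the two parities yields exactly the stated criterion.

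I expect the main obstacle to be the cancellation in the second step — arranging the algebra so that $\gcd(A_n,B_n)$ collapses to $s^{\,n-1}v^{\lfloor n/2\rfloor}\gcd(c_n,a_n)$ uniformly in the parity of $n$ — together with the realization, in the odd case, that it is the exact identity $\gcd(v,a_n)=1$, rather than the weaker fact that $v$ may be a perfect square, that is needed to conclude $v=1$. The residue computation $a_{2i+1}\equiv\epsilon^i\pmod v$ is the heart of that point.
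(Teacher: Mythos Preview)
Your argument is correct, and it is genuinely different from the paper's. The paper does not compute $\gcd(A_n,B_n)$ at all; instead it invokes Theorem~\ref{thm:patterns} together with the classical identity $p^2 - kq^2 = (-1)^n Q$ relating a truncation of $\sqrt{k} = [c_0,c_1,\ldots]$ to the denominator $Q$ of the remainder $[c_n,c_{n+1},\ldots] = (P+\sqrt{k})/Q$. Since the remainder after $n$ packets is $\xi_n - \de$, whose irrational part has coefficient $s_n^2 v_n / (s^2 v)$, the Pellian condition $Q = 1$ becomes $s_n^2 v_n = s^2 v$, which immediately splits as $s_n = s$ and $v_n = v$.

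Your route trades the continued-fraction machinery for a direct norm--gcd computation: from $A_n^2 - kB_n^2 = (-\epsilon s^2 v)^n$ you reduce the question to $\gcd(A_n,B_n)^2 = (s^2 v)^n$, and then evaluate that gcd via the closed form $B_n = s^{n-1}v^{\lfloor n/2\rfloor}a_n$. The algebraic simplification of $A_n = B_{n+1} - dB_n$ to $s^{n-1}v^{\lfloor n/2\rfloor}(d a_n + \epsilon c_n a_{n-1})$ is clean, and the key coprimality $\gcd(v,a_n)=1$ for odd $n$ (your residue computation $a_{2i+1}\equiv \epsilon^i \pmod v$) is exactly what forces $v=1$ in the odd case. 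Compared with the paper, your proof is more self-contained---it uses only Lemma~\ref{lem:AB} and Proposition~\ref{prop:a}, not the full pattern continued fraction of Theorem~\ref{thm:patterns}---whereas the paper's version makes visible \emph{where} in the continued-fraction period the Pellian convergents sit, at the cost of depending on the heavier structural theorem.
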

\begin{proof}
We use the standard fact that if we cut a continued fraction $\sqrt{k} = [c_0,c_1,\ldots]$ and compare the resulting convergent and remainder
\[
  \frac{p}{q} = [c_0,\ldots,c_{n-1}] \textand \frac{P+\sqrt{k}}{Q} = [c_n,c_{n+1},\ldots],
\]
then $p^2 - kq^2 = (-1)^n Q$ (Theorem 7.22 of \cite{NZM}, where it is to be noted that the positivity of the $c_i$ is not used). Therefore an iterate $f^n(\infty)$ is Pellian if and only if the corresponding remainder $[P_{n},P_{n+1},\ldots]$ of the pattern continued fraction is of the form $P+\sqrt{k}$ with no denominator. But this remainder is
\[
  \xi_n - \de = \frac{\xi s_n^2 v_n - \epsilon s s_n v m_n}{s^2 v} - \de,
\]
so $f^n(\infty)$ is Pellian if and only if $s_n^2 v_n = s^2 v$, which reduces to $v_n = v$ and $s_n = s$ which are respectively the two conditions in the statement of the theorem.
\end{proof}
In particular, the minimal $n$ yielding a Pellian iterate is independent of $v$ and $m$ when these remain in fixed congruence classes mod $s$, except that $n$ may jump down by a factor of $2$ when $v$ becomes $1$.

Pellian convergents outside the orbit $\{f^n(\infty)\}$, that is to say, periodicities in the simple continued fraction not reflected in the pattern continued fraction, are much trickier to study. Since we need to analyze cases where $k$ is not an integer, we first generalize the notion of a Pellian convergent to arbitrary quadratic integers.

\begin{defn}
If $\xi$ is an algebraic integer satisfying a quadratic equation $\xi^2 - t\xi - u = 0$ and $\xi > |\bar{\xi}|$, then a fraction $p/q$ ($p,q \in \ZZ$) is called \emph{Pellian} for $\xi$ if
\begin{equation} \label{eq:pellian}
  q>0, \quad \frac{p}{q} > \frac{t}{2}, \textand p^2 - t p q - u q^2 = \pm 1.
\end{equation}
\end{defn}
(The first two conditions generalize the restrictions $p>0$, $q>0$ used to filter out the redundant solutions of the ordinary Pell equation.)
\begin{prop}
If $p$ and $q$ are integers and $\xi$ a quadratic integer with $\xi > |\bar{\xi}|$, the following are equivalent:
\begin{enumerate}[$(a)$]
  \item $p/q$ is Pellian for $\xi$;
  \item $p - q\bar{\xi}$ is a unit in $\ZZ[\xi]$ and exceeds the absolute value of its conjugate;
  \item $p/q$ is a convergent in the continued fraction expansion of $\xi$ built from a number of terms that is divisible by the period length $L$.
\end{enumerate}
\end{prop}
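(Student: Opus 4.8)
The plan is to prove $(a)\Leftrightarrow(b)$ by translating \eqref{eq:pellian} into the unit theory of the order $\ZZ[\xi]$, and then to prove $(a)\Leftrightarrow(c)$ from the classical structure of the continued fraction of a quadratic surd. For $(a)\Leftrightarrow(b)$, set $\alpha = p - q\bar{\xi}$; since $\bar{\xi} = t-\xi$ this lies in $\ZZ[\xi]$, its conjugate is $\bar{\alpha} = p-q\xi$, and $N(\alpha) = \alpha\bar{\alpha} = p^2 - tpq - uq^2$, so the last condition of \eqref{eq:pellian} says exactly that $\alpha$ is a unit. To match the first two conditions with ``$\alpha > |\bar{\alpha}|$'' I would use $\alpha - \bar{\alpha} = q(\xi - \bar{\xi})$ and $\alpha + \bar{\alpha} = 2p - qt$: the hypothesis $\xi > |\bar{\xi}|$ forces $\xi - \bar{\xi} > 0$, so $\sgn q = \sgn(\alpha - \bar{\alpha})$, and (once $q > 0$) $p/q > t/2$ is equivalent to $\alpha + \bar{\alpha} > 0$; a one-line case analysis on the hyperbola $|\alpha\bar{\alpha}| = 1$ then shows ``$q > 0$ and $p/q > t/2$'' amounts to ``$\alpha > 1 > |\bar{\alpha}|$'', which is $(b)$. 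This part is routine.

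For $(a)\Leftrightarrow(c)$, write $\xi = [c_0,c_1,\ldots]$ with convergents $C_n = p_{n-1}/q_{n-1} = [c_0,\ldots,c_{n-1}]$ and complete quotients $\xi_n$. First I would check that $\xi > |\bar{\xi}|$ forces $\xi_1$ to be \emph{reduced}: two conjugate quadratic integers cannot lie in the same, nor in two adjacent, unit intervals of $\RR$ (else the discriminant $t^2+4u$ would be less than $5$), so $\bar{\xi} < \lfloor\xi\rfloor - 1$ and hence $\bar{\xi}_1 \in (-1,0)$. Thus $\xi = [c_0,\overline{c_1,\ldots,c_L}]$, and every $\xi_n$ ($n\geq 1$) is reduced (so $\bar{\xi}_n \in (-1,0)$) and purely periodic with minimal period $L$. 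Next, the telescoping identity $p_{n-1} - q_{n-1}\bar{\xi} = (-1)^n/(\bar{\xi}_1\cdots\bar{\xi}_n)$ together with the norm identity $p^2 - tpq - uq^2 = (-1)^n Q_n$ (the generalization to quadratic integers of Theorem 7.22 of \cite{NZM}, in the normalization $\xi = (P_0 + \sqrt{D})/Q_0$, as already used above) gives $N(p_{n-1} - q_{n-1}\bar{\xi}) = (-1)^n Q_n/Q_0$; by $(a)\Leftrightarrow(b)$ the convergent $C_n$ is then Pellian iff $Q_n = Q_0$, the inequality ``$>|\bar{\alpha}|$'' being automatic since $\bar{\xi}_1,\ldots,\bar{\xi}_n\in(-1,0)$ already force $p_{n-1} - q_{n-1}\bar{\xi}$ to be positive with absolute value exceeding $1$. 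Finally I would prove $Q_n = Q_0 \iff L\mid n$: ``$\Leftarrow$'' because $\xi_L = \xi_0 + (c_L - c_0)$ differs from $\xi_0$ by an integer, forcing $Q_L = Q_0$ and hence $Q_{kL} = Q_0$ by periodicity; ``$\Rightarrow$'' by using that $\xi$ is an \emph{algebraic integer}, which confines $Q_0$ to $\{1,2\}$, so that reducedness of $\xi_n$ determines $P_n$ uniquely once $Q_n = Q_0$, whence $\xi_n = \xi_L$ and $L\mid n$. This yields $(c)\Rightarrow(a)$, and shows that the Pellian convergents of $\xi$ are precisely the $C_{kL}$, $k\geq 1$.

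It remains to prove $(a)\Rightarrow(c)$, i.e.\ that every Pellian $p/q$ is in fact a convergent. For this I would appeal to the classical theorem of Lagrange---the generalization of Lemma \ref{lem:conv} to an arbitrary real quadratic irrational, with $|N(p - q\xi)| < \tfrac12(\xi - \bar{\xi})$ in place of $|p^2 - kq^2| < \sqrt{k}$---which applies because $|N(p - q\xi)| = 1$ while $\tfrac12(\xi - \bar{\xi}) = \tfrac12\sqrt{t^2 + 4u} \geq \tfrac12\sqrt{5} > 1$, the condition $p/q > t/2$ guaranteeing that $p/q$ approximates $\xi$ (not $\bar{\xi}$) and so is a convergent \emph{of $\xi$}. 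Once $p/q = C_n$, the norm computation above forces $Q_n = Q_0$ and hence $L\mid n$, closing the loop. I expect the main obstacle to be the bookkeeping behind ``$Q_n = Q_0 \iff L\mid n$'': this is the one place the algebraic-integer hypothesis genuinely bites, and making the ``reducedness pins down $P_n$'' step airtight---without leaning on the cross-ratio and LFT machinery that carries the rest of the paper (cf.\ the proof of Theorem \ref{thm:easy})---will take some care; getting Lagrange's theorem in exactly the right generality, with the clause $p/q > t/2$ doing the work of selecting $\xi$ over $\bar{\xi}$, is the other point I would not want to gloss over.
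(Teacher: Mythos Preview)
Your argument is essentially correct but takes a genuinely different route from the paper's.

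For $(a)\Leftrightarrow(b)$ you and the paper agree. For $(a)\Leftrightarrow(c)$, however, the paper never touches complete quotients $Q_n$ or any Lagrange-type convergent criterion. Instead, after translating to the purely periodic case, it works entirely with LFT's: for $(c)\Rightarrow(a)$ it identifies the map $x\mapsto[c_0,\ldots,c_{nL-1},x]$ explicitly as $(px+uq)/(qx+p-tq)$ by checking three values, reads off the Pellian conditions from the determinant and the obvious bounds on $p/q$; for $(a)\Rightarrow(c)$ it starts from the Pellian data, builds the same LFT $g(x)=(px+uq)/(qx+p-tq)$, proves the inequalities $0\le p-tq\le q$ directly from $t\ge u>0$ and $p^2-tpq-uq^2=\pm1$, and then invokes Lemma~\ref{lem:concat} to realize $g$ as a continued-fraction tail, whence $g(\xi)=\xi$ forces $p/q$ to be a full-period convergent. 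The paper's own remark after the proof flags this as a ``refreshing alternative'' to exactly the convergent-criterion route you take.

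Your approach is the classical one and is perfectly sound; its cost is that the generalization of Lemma~\ref{lem:conv} to arbitrary quadratic $\xi$ (your ``$|N(p-q\xi)|<\tfrac12(\xi-\bar\xi)$ implies convergent'') is an external input that itself needs a careful proof, whereas the paper's argument is self-contained given Lemma~\ref{lem:concat}, which it has already established for other purposes. Conversely, your $Q_n$-bookkeeping makes the equivalence ``Pellian convergent $\iff Q_n=Q_0\iff L\mid n$'' very transparent and uses the algebraic-integer hypothesis in a visible way (pinning $Q_0\in\{1,2\}$), while the paper's bound $0\le p-tq\le q$ hides that hypothesis inside the inequality $t\ge u$. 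One small cleanup: your norm identity should read $(-1)^nQ_n/Q_0$ throughout, not $(-1)^nQ_n$; the $Q_0$ is what makes ``Pellian $\iff Q_n=Q_0$'' come out right in both the $Q_0=1$ and $Q_0=2$ normalizations.
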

\begin{proof}
The equivalence of (a) and (b) is purely formal: the three inequalities \eqref{eq:pellian} can be written in terms of $\alpha = p - q\bar{\xi}$ as
\[
  \alpha > \bar{\alpha}, \quad \alpha > -\bar{\alpha}, \textand \alpha\bar{\alpha} = \pm 1.
\]

To prove that (c) implies (a), we may first replace $\xi$ by $\xi - \lceil \bar{\xi} \rceil$ to assume that $-1 < \bar{\xi} < 0$. We then have $\xi > 1$ (since $\xi > \bar{\xi}$ and $|\xi \bar{\xi}| = |u| \geq 1$), and it is well known (see \cite{NZM}, Theorem 7.20) that this implies $\xi$ has a purely periodic continued fraction expansion
\[
  \xi = [\overline{c_0,\ldots,c_{L-1}}].
\]
If $p/q$ is the $nL$th convergent for some $n \geq 1$, it is easy to prove that
\begin{equation} \label{eq:qdet}
  [c_0, c_1,\ldots, c_{nL-1}, x] = \frac{p x + u q}{q x + p - t q}
\end{equation}
by comparing the images of $\xi$, $\bar{\xi}$, and $\infty$ under the LFT's on each side; using the determinant condition $p(p-tq) - uq^2 = \pm 1$ and the bounds
\[
  q > 0 \textand \frac{p}{q} \geq \lfloor \xi \rfloor = t > \frac{t}{2},
\]
we deduce that $p/q$ is Pellian.

Finally we prove that (a) implies (c). Again we may assume $\xi$ is purely periodic; the bounds $\xi > 0 > \bar{\xi} > -1$ imply that the quadratic $x^2 - t x - u$ is negative at $x = 0$ and positive at $x = -1$, yielding the bounds
\[
  t \geq u > 0.
\]
Assume that $p/q$ is Pellian with $p^2 - t p q - u q^2 = \epsilon = \pm 1$; then
\[
  g(x) = \frac{p x + u q}{q x + p - t q}
\]
is an LFT of determinant $\epsilon$. We would like to use Lemma \ref{lem:concat} to conclude that $g(x) = [c_0,\ldots,c_n,x]$ where $[c_0,\ldots,c_n]$ is one continued fraction expansion of $p/q$ and the parity of $n$ is determined by $\epsilon$. It suffices to prove the bounds
\begin{equation} \label{eq:bound}
  0 \leq p-tq \leq q,
\end{equation}
that is,
\[
  t \leq \frac{p}{q} \leq t+1,
\]
since the equality clearly can only hold when $\epsilon$ is negative or positive respectively, making $n$ even or odd respectively. The left inequality of \eqref{eq:bound} is straightforward:
\[
  p-tq = \frac{u q^2 + \epsilon}{p} \geq 0
\]
since $p$, $q$, and $u$ are positive. For the right inequality, if $p - tq > q$ then $p > (t+1)q$ and
\[
  \epsilon = p(p-tq) - uq^2 > (t-u+1)q^2 \geq 1,
\]
a contradiction. Hence
\[
  g(x) = [c_0,\ldots,c_n,x] = \frac{p x + u q}{q x + p - t q}
\]
fixes $\xi$; we obtain a continued fraction
\[
  \xi = [\overline{c_0,\ldots,c_n}],
\]
and conclude that $p/q$ consists of one or more complete periods.
\end{proof}
\begin{rem}
When $\xi = \sqrt{k}$, our proof of (c)${} \Leftrightarrow {}$(a) offers a refreshing alternative to the standard solution of the Pell equation, which goes through a convergent criterion such as Lemma \ref{lem:conv}.
\end{rem}
A by-product of our proof method is that Pellian fractions behave nicely with respect to LFT's:
\begin{lem}
Let $p_n/q_n$ denote the $n$th Pellian fraction for $\xi$, formed from $nL$ terms of its continued fraction. If $g$ is an LFT such that
\[
  g(\xi) = \xi, \quad g(\bar{\xi}) = \bar{\xi}, \textand g(\infty) = p_n/q_n,
\]
then $g^i(\infty) = p_{in}/q_{in}$.
\end{lem}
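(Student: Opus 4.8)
The plan is to realize the given $g$ as a power of a single ``period-advancing'' LFT whose successive images of $\infty$ are exactly the Pellian fractions $p_i/q_i$. First I would reduce to the case in which $\xi$ is purely periodic: as in the proof of the preceding proposition, replacing $\xi$ by $\xi - \lceil \bar{\xi} \rceil$ arranges $-1 < \bar{\xi} < 0$ and hence $\xi = [\overline{c_0,\ldots,c_{L-1}}]$. This substitution is effected by the LFT $\rho(x) = x - \lceil\bar{\xi}\rceil$, which fixes $\infty$, commutes with taking powers, sends the convergent of $\xi$ from $jL$ partial quotients to the corresponding convergent of $\xi - \lceil\bar{\xi}\rceil$, and conjugates any LFT fixing $\xi$ and $\bar{\xi}$ to one fixing $\xi - \lceil\bar{\xi}\rceil$ and its conjugate. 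Since by part (c) of that proposition the convergents from $jL$ terms are precisely the Pellian fractions, $\rho$ matches up the two lists index for index, and it suffices to prove the lemma after this reduction.

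Now put $h(x) = [c_0,\ldots,c_{L-1},x]$. By pure periodicity $h(\xi) = \xi$, and since $h$ has integer entries its second fixed point is forced to be the Galois conjugate $\bar{\xi}$. Iterating the concatenation identity $[a_0,\ldots,a_j,[b_0,b_1,\ldots]] = [a_0,\ldots,a_j,b_0,b_1,\ldots]$ together with the relations $c_{j+L} = c_j$ gives $h^i(x) = [c_0,\ldots,c_{iL-1},x]$ for every $i \geq 1$; by \eqref{eq:qdet} this LFT carries $\infty$ to the convergent $[c_0,\ldots,c_{iL-1}] = p_i/q_i$. Thus for all $i$ the LFT $h^i$ fixes both $\xi$ and $\bar{\xi}$ and satisfies $h^i(\infty) = p_i/q_i$.

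To finish, observe that $h^n$ and the hypothesized $g$ agree at the three distinct points $\xi$, $\bar{\xi}$, and $\infty$ — the last because $h^n(\infty) = p_n/q_n = g(\infty)$ — so $g = h^n$. Consequently $g^i = h^{ni}$, and therefore $g^i(\infty) = h^{ni}(\infty) = p_{ni}/q_{ni}$, as claimed. The only point that demands any care is the bookkeeping in the reduction step: one must be sure that the integer shift $\xi \mapsto \xi - \lceil\bar{\xi}\rceil$ really does identify the $j$th Pellian fraction of one quadratic integer with the $j$th Pellian fraction of the other, so that the statement being proved is genuinely unchanged. This follows at once from the unit description in part (b) of the preceding proposition, since $p/q$ is Pellian for $\xi$ exactly when $p - q\bar{\xi}$ is a unit of $\ZZ[\xi] = \ZZ[\xi - \lceil\bar{\xi}\rceil]$ exceeding the absolute value of its conjugate, and $p - q\bar{\xi} = (p - \lceil\bar{\xi}\rceil q) - q\,\overline{(\xi - \lceil\bar{\xi}\rceil)}$. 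Everything else is a routine application of tools already developed.
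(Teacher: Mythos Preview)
Your proof is correct and follows essentially the same approach as the paper's: identify $g$ with the LFT that advances the continued fraction by $nL$ terms, via the three-point criterion. The paper does this in one line by writing $g(x) = [c_0,\ldots,c_{nL-1},c_{nL}-c_0+x]$ directly (which handles general $\xi$ without an explicit reduction) and observing that this map sends $p_i/q_i$ to $p_{i+n}/q_{i+n}$; you instead reduce to the purely periodic case and factor $g$ as $h^n$ for the one-period map $h$, but the underlying idea is identical.
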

\begin{proof}
The three values given for $g$ are sufficient to identify it as the LFT in \eqref{eq:qdet}, or for general $\xi$,
\[
  g(x) = [c_0,c_1,\ldots,c_{nL-1},c_{nL}-c_0+x],
\]
which clearly takes $p_i/q_i$ to $p_{i+n}/q_{i+n}$.
\end{proof}
In particular, the $L$th convergent $p_1/q_1$ is fundamental in the sense that if an LFT fixing $\xi$ and $\bar{\xi}$ hits it, when iterating from $\infty$, then the LFT hits all Pellian convergents of $\xi$. Using a technique similar to Lemma \ref{lem:AB}, we deduce that
\[
  p_n - q_n \bar{\xi} = (p_1 - q_1 \bar{\xi})^n,
\]
so $p_1 - q_1 \bar{\xi}$ is a fundamental unit in the order $\ZZ[\xi]$.

\begin{thm}\label{thm:pell}
If $\xi = d + \sqrt{k}$, where $d,k \in \QQ$ have the form in Proposition \ref{prop:parametrize}, then the orbit $\{f_\xi^n(\infty)\}$ contains all Pellian convergents to $\xi$, except in the following cases:
\begin{enumerate}[(1)]
  \item $\xi = s\cdot \dfrac{3 + \sqrt{5}}{2}$ or $\xi = s\cdot \dfrac{5 + \sqrt{5}}{2}$, where $s$ divides some Fibonacci number $F_{2n+1}$ of odd index, using the definition
  \[
    F_0 = 0, \quad F_1 = 1, \quad F_{n+1} = F_n + F_{n-1}.
  \]
  \item $\xi = s(2 + \sqrt{2})$, where $s$ divides some ``Pell number'' $G_{2n+1}$ of odd index, using the definition
  \[
    G_0 = 0, \quad G_1 = 1, \quad G_{n+1} = 2G_n + G_{n-1}.
  \]
\end{enumerate}
\end{thm}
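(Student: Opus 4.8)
The plan is to reduce the statement to a question about when the fundamental Pellian convergent $p_1/q_1$ (formed from one period $L$ of the continued fraction of $\xi$) lies in the orbit $\{f_\xi^n(\infty)\}$. Indeed, by the lemma immediately preceding the theorem, if the orbit contains $p_1/q_1$ then it contains all $p_n/q_n$; conversely, the orbit always contains \emph{some} Pellian convergent by Theorem~\ref{thm:easy}, and by the structure lemma such a convergent is of the form $p_N/q_N$ for some $N\geq 1$. So the orbit misses a Pellian convergent precisely when the smallest index $N$ with $p_N/q_N$ in the orbit satisfies $N\geq 2$, i.e.\ when the ``period'' of the pattern continued fraction (in packet-units) is a proper multiple of the true period $L$. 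Using the preceding theorem characterizing Pellian iterates, $f_\xi^n(\infty)$ is Pellian iff $n$ is even or $v=1$, \emph{and} $s\mid a_n$; so I would first translate the exceptional condition into: the minimal such $n$ (call it $n_0$) is not the period of the pattern continued fraction, equivalently some $\xih_j=1$ (for $\epsilon=1$) or $\xih_j=2$ (for $\epsilon=-1$), since those are exactly the values at which a packet $P_j$ degenerates and two packets collapse into one via the rule $[\dots,c,1,x]=[\dots,c+1,x]$ (resp.\ the $\epsilon=-1$ analogue), shortening the true period below the naive packet-count.

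Next I would pin down when a degenerate packet can occur. From Theorem~\ref{thm:conv2}'s computation, $\xih_n-\de \geq m/s - 1$, so if $m\geq 2s+\de$ no degeneracy happens and the pattern continued fraction is already simple with period exactly $n_0$; hence the orbit contains all Pellian convergents. Therefore the exceptions live entirely in the range $s+\de \leq m < 2s+\de$ (if $m<s+\de$ then $|\bar\xi|\geq 1$ by Lemma~\ref{prop:nice} and one checks $\xi$ is not even of the relevant reduced type, or the continued fraction is non-simple in a way handled separately — I'd dispose of this boundary case directly). Within that narrow band I would examine the inequality $\xih_n - \de \leq 1$, i.e.\ $s_nv_nm - \epsilon m_n \leq (1+\de)s/s_n$: since $s_nv_n m \geq m \geq s+\de$ and $m_n < s/s_n \leq s$, this forces $s_n=1$, $v_n=1$ (so $n$ is odd), $m$ just above $s+\de$, and $m_n$ near its maximum; a short case analysis on the few admissible $(s,v,m,\epsilon)$ with $m\in\{s,s+1\}$ should reveal that $\xi$, after the reduction $\xi\mapsto\xi-\lceil\bar\xi\rceil$ used throughout the Pellian discussion, is forced into one of a very small list of quadratic irrationals — and the only ones whose associated order has the property that $s$ \emph{can} divide the relevant recurrence value $a_n$ are the golden-ratio type $(v m^2 - 4 = \text{small})$ and the $\sqrt 2$ type, giving exactly the surds $s\cdot\frac{3+\sqrt5}{2}$, $s\cdot\frac{5+\sqrt5}{2}$, and $s(2+\sqrt2)$ listed.

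Finally, for each of these three families I would carry out the explicit check: compute the sequence $a_n\bmod s$ (which for $\xi=s\cdot\frac{3+\sqrt5}{2}$ is, up to normalization, the Fibonacci sequence $F_n\bmod s$, and for $\xi=s(2+\sqrt2)$ the Pell sequence $G_n\bmod s$), determine the least $n$ with $s\mid a_n$ and $n$ odd-or-$v{=}1$, and compare it to the true period $L$ of the continued fraction of the reduced surd (which is $1$ for $\frac{1+\sqrt5}{2}$-type and $2$ for $1+\sqrt2$-type). The orbit fails to contain the fundamental Pellian convergent precisely when that least $n$ is a proper multiple of $L$, which — because the Fibonacci/Pell sequences have zeros at \emph{every} index divisible by their rank of apparition, but the extra parity constraint ``$n$ odd'' can knock out the smallest zero — happens exactly when $s$ divides an $F_{2n+1}$ (resp.\ $G_{2n+1}$) of \emph{odd} index, matching the statement. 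The main obstacle I anticipate is the bookkeeping of the normalization shifts ($\xi\mapsto\xi-\de$, $\xi\mapsto\xi-\lceil\bar\xi\rceil$, and the scaling by $s$) together with the parity constraint: one must be careful that ``degenerate packet'' $\Leftrightarrow$ ``$n_0 \neq$ true period'' is an iff and not merely one direction, since a packet $P_j$ with $\xih_j=1$ shortens the period only if the collapse actually bridges across the point where a Pellian iterate would have occurred — verifying that the collapse happens at the right place (rather than harmlessly in the interior of a super-period) is where the explicit Fibonacci/Pell computation does the real work, and where I expect to spend most of the effort pinning down the ``odd index'' condition precisely.
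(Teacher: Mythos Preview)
Your plan has a real gap in the step where you claim that for $m \geq 2s+\de$ the pattern continued fraction is ``simple with period exactly $n_0$,'' hence the orbit contains every Pellian convergent. Simplicity (all terms $\geq 1$) follows from Theorem~\ref{thm:conv2}, but that does \emph{not} by itself guarantee that the first Pellian iterate $f_\xi^{n_0}(\infty)$ is the \emph{fundamental} Pellian convergent $p_1/q_1$. Even with no zero terms, the true period $L$ of the simple continued fraction could in principle be a proper divisor of the number of terms in $n_0$ packets, putting a Pellian convergent in the interior of some packet where the orbit cannot see it. You acknowledge the converse worry (that a collapse might be ``harmless''), but the forward direction---why no collapse implies no missed Pellian---is left as an assertion, and I do not see how to prove it directly from packet combinatorics for general $s$.

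The paper sidesteps this entirely with an idea you are missing: the scaling reduction $\xi \mapsto \xi/s$. Since $f_\xi(sy) = s\,f_{\xi/s}(y)$, the orbits of $f_\xi$ and $f_{\xi/s}$ differ only by a factor of $s$, while $\ZZ[\xi] = \ZZ[s\cdot(\xi/s)]$ is a \emph{suborder} of $\ZZ[\xi/s]$. Hence if $f_{\xi/s}$ finds the fundamental unit of the larger order $\ZZ[\xi/s]$, it automatically sweeps up every unit of the suborder $\ZZ[\xi]$, i.e.\ every Pellian convergent of $\xi$. This reduces the whole problem to $s=1$, where the pattern continued fraction has one of a handful of explicit shapes ($[\overline{vm,m}]$ for $\epsilon=1$; $[vm-1,\overline{1,m-2,1,vm-2}]$ and its degenerations for $\epsilon=-1$), and a short inspection isolates exactly the three surds $\tfrac{3+\sqrt5}{2}$, $\tfrac{5+\sqrt5}{2}$, $2+\sqrt2$ where the period drops. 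Only then, for those three $\xi/s$, does one ask when the missed unit $\eta$ lies in the suborder $\ZZ[s\cdot(\xi/s)]$---and that is precisely the Fibonacci/Pell odd-index divisibility condition. Your packet-degeneracy analysis for arbitrary $s$ would, even if the gap above were filled, require a case analysis over the whole range $s+\de \leq m < 2s+\de$ with $s$ unbounded; the scaling trick collapses that to $s=1$ and four lines of continued fractions.
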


\begin{proof}
We begin by dealing with the case $s=1$, as larger values of $s$ simply scale the iterates of $f_\xi$ by $s$ and will be dealt with in a simple way afterwards.

Thanks to Theorem \ref{thm:family}, the pattern continued fraction expansion of $\xi$ for $s=1$ has only two possible shapes, corresponding to $\epsilon = 1$ and $\epsilon = -1$. If $\epsilon = 1$, the pattern continued fraction is
\[
  \xi = [\overline{vm, m}],
\]
which is necessarily simple, and the packets are of length $1$. Hence the orbit contains all Pellian convergents because it consists of all convergents.

If $\epsilon = -1$, then Theorem \ref{thm:patterns} instead yields the continued fraction
\[
  \xi = [vm - 1, \overline{1, m-2, 1, vm-2}]
\]
with packets of length $2$. If $m \geq 3$, this continued fraction is simple. It ordinarily has period $4$ (if $v \geq 2$) or $2$ (if $v = 1$), causing the orbit to contain the Pellian convergents, with one exception: $v=1$ and $m=3$, where $\xi = [2,\overline{1}]$ has period $1$. Here $\xi = \frac{3 + \sqrt{5}}{2}$, yielding the first exceptional case in the statement of the theorem.

If $m = 2$, the pattern continued fraction has a zero and simplifies:
\[
  \xi = [2v-1,\overline{1,0,1,2v-2}] = [2v-1,\overline{2,2v-2}].
\]
We must have $v\geq 2$ for $\xi$ to be irrational, so the last continued fraction is simple and has period $2$ (implying that the convergent $2v-\frac{1}{2} = f_\xi^2(\infty)$ is the first Pellian one) unless $v = 2$, leading to another exceptional case $\xi = 2 + \sqrt{2}$.

If $m = 1$, we must have $v \geq 5$ for $\xi$ to be real and irrational. The pattern continued fraction---with one term $-1$---is not easy to simplify, but by various means (e.g{.} comparison to $\xi-2$, which also has a pattern continued fraction corresponding to putting $1$ for $s$ and $m$, $v-4$ for $v$, and $1$ for $\epsilon$), we see that the correct simple continued fraction is
\[
  \xi = [v-1, \overline{1, v-4}].
\]
If $v \geq 6$, then the first Pellian convergent is $[v-1, 1] = v$ which is also the second iterate of $f_\xi$. But for $v = 5$ the period becomes $1$ and the first convergent $v-1$ is also Pellian, leading to the final exceptional case $\xi = \frac{5 + \sqrt{5}}{2}$.

For general $s$, consider
\[
  \frac{\xi}{s} = \frac{vm + \sqrt{v(vm^2 + 4\epsilon)}}{2}.
\]
If $\xi/s$ is not one of the three exceptions to the above analysis when $s=1$, then the LFT $f_{\xi/s}$ finds the fundamental unit in the order $\ZZ[\xi/s]$, of which $\ZZ[\xi]$ is a suborder. Consequently, $f_\xi$ picks up all Pellian convergents to $\xi$ in this case.

If $\xi/s$ is $\frac{3+\sqrt{5}}{2}$ or $\frac{5+\sqrt{5}}{2}$, then we are dealing with the order $\ZZ[\xi/s] = \ZZ[\phi]$, where $\phi = \frac{1+\sqrt{5}}{2}$ is the golden ratio. In either case, the fundamental unit is $\phi$ but $f_{\xi/s}$ only picks up $\phi^2$ and its powers. Therefore, $f_\xi$ misses a Pellian convergent of $\xi$ if and only if the order $\ZZ[\xi] = \ZZ[s\phi]$ contains some odd power $\phi^{2n+1}$ of $\phi$. In view of the identity
\[
  \phi^n = F_n\phi + F_{n-1},
\]
this holds if and only if $s|F_{2n+1}$ for some $n$.

If $\xi/s = 2 + \sqrt{2}$, the proof is exactly analogous: the order $\ZZ[\xi/s] = \ZZ[\sqrt{2}]$ has fundamental unit $\alpha = 1 + \sqrt{2}$, but the LFT $f_{\xi/s}$ only finds its square. Using the identity
\[
  \alpha^n = G_n\alpha + G_{n-1},
\]
we find that $f_\xi$ misses a Pellian convergent if and only if $s|G_{2n+1}$ for some $n$.
\end{proof}
\begin{rem}
In view of the identities
\[
  F_{2n+1} = F_n^2 + F_{n-1}^2 \textand G_{2n+1} = G_n^2 + G_{n-1}^2,
\]
any exceptional value of $s$ divides a sum of two coprime squares and thus equals a product of primes congruent to $1$ mod $4$, with an optional factor of $2$. Not all such $s$ divide some $F_{2n+1}$ or $G_{2n+1}$, however ($s=29$ fails in the Fibonacci case, and $s=17$ fails in the Pell number case).
\end{rem}
\begin{cor}\label{cor:allpell}
If $d$ and $k$ are integers with $4d^2/(k-d^2) \in \ZZ$, then the orbit $\{f^n(\infty)\}$ contains all Pellian convergents except when $(k,d)$ is a pair of one of the forms
\begin{itemize}
  \item $k = 5s^2$, $d = 3s$ or $5s$, where $2s|F_{2n+1}$;
  \item $k = 2s^2$, $d = 2s$, where $s|G_{2n+1}$.
\end{itemize}
\end{cor}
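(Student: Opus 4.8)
The plan is to obtain the corollary as a direct specialization of Theorem \ref{thm:pell} to the case in which $d$ and $k$ are integers. First I would record the translation between the two settings. The map $x \mapsto x + d$ conjugates $f = f_d$ to $f_\xi$ (indeed $f_\xi(x) = f(x-d)+d$), so that $f_\xi^n(\infty) = f^n(\infty) + d$; the same map carries the continued fraction of $\sqrt{k}$ to that of $\xi = d + \sqrt{k}$ (which satisfies $\xi^2 - 2d\xi - (k - d^2) = 0$), and a classical Pellian convergent $p/q$ of $\sqrt{k}$ (with $p, q > 0$ and $p^2 - kq^2 = \pm 1$) to the fraction $(p+dq)/q$, which is a Pellian convergent of $\xi$ in the generalized sense introduced earlier, since $(p+dq)^2 - 2d(p+dq)q - (k-d^2)q^2 = p^2 - kq^2$. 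Hence the assertion ``$\{f^n(\infty)\}$ contains all Pellian convergents of $\sqrt{k}$'' is equivalent to the conclusion of Theorem \ref{thm:pell} for $\xi$; and by Proposition \ref{prop:parametrize} every integer pair $(k,d)$ with $R = 4d^2/(k-d^2) \in \ZZ$ has the form required by that theorem, so the theorem applies.

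It then remains to translate the three intrinsic exceptional forms of $\xi$ listed in Theorem \ref{thm:pell} into conditions on the integers $k$ and $d$. If $\xi = s\cdot\frac{3+\sqrt{5}}{2}$ with $\xi = d + \sqrt{k}$ and $d, k \in \ZZ$, then $d = 3s/2$ and $\sqrt{k} = \frac{s}{2}\sqrt{5}$, so integrality forces $s$ to be even; writing $s = 2\sigma$ gives $(k,d) = (5\sigma^2, 3\sigma)$, and the condition $s \mid F_{2n+1}$ becomes $2\sigma \mid F_{2n+1}$. Likewise $\xi = s\cdot\frac{5+\sqrt{5}}{2}$ forces $s = 2\sigma$ and yields $(k,d) = (5\sigma^2, 5\sigma)$ with $2\sigma \mid F_{2n+1}$. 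Finally $\xi = s(2+\sqrt{2})$ gives $d = 2s$ and $k = 2s^2$, which are integers for every $s$, with exceptional condition $s \mid G_{2n+1}$. One also checks in passing that each of these presentations of $\xi$ is the only one of the three listed shapes, so that the families do not overlap and nothing is counted twice; renaming $\sigma$ as $s$ in the first two cases then produces exactly the two families displayed in the corollary.

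I do not expect a genuine obstacle, since all of the substance lies in Theorem \ref{thm:pell} and the remainder is bookkeeping. The one point that requires attention is the factor of $2$ relating the parameter $s$ occurring in the exceptional cases of Theorem \ref{thm:pell} to the parameter $s$ of the corollary: integrality of $d = 3s/2$ or $5s/2$ forces that $s$ to be even in the two Fibonacci cases (producing the ``$2s$'' in the divisibility condition), whereas $d = 2s$ is automatically an integer in the Pell-number case, so there the two parameters coincide.
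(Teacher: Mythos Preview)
Your derivation is correct and is precisely the specialization the paper intends: the corollary is stated without proof immediately after Theorem~\ref{thm:pell}, and your bookkeeping (the shift $x\mapsto x+d$ identifying Pellian convergents of $\sqrt{k}$ with those of $\xi$, together with the parity-of-$s$ analysis that produces the factor $2$ in the Fibonacci divisibility condition) is exactly what is needed to pass from the theorem to the corollary.
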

(In the exceptional cases, the choices $d = 2s$ and $d = s$, respectively, may be used instead if an LFT hitting all Pellian convergents is desired.)

Since the exceptions occur only when the LFT jumps to a solution of the positive Pell equation, skipping over a solution of the negative one, we have the following simple corollary.
\begin{cor}
If $d$ and $k$ are integers with $4d^2/(k-d^2) \in \ZZ$, then the orbit $\{f^n(\infty)\}$ contains all convergents $p/q$ satisfying the positive Pell equation $p^2 - k q^2 = 1$.
\end{cor}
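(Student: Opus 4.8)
The plan is to reduce at once to Corollary~\ref{cor:allpell} and then to check that its two exceptional families cost us nothing on the \emph{positive} side of the Pell equation. If $(k,d)$ is not one of the exceptional pairs in Corollary~\ref{cor:allpell}, the orbit $\{f^n(\infty)\}$ already contains \emph{every} Pellian convergent of $\sqrt{k}$, hence in particular every $p/q$ with $p^2-kq^2=1$, and there is nothing to prove. So I would assume $(k,d)$ is exceptional and pass to $\xi=d+\sqrt{k}$. Since $d\in\ZZ$ we have $f_\xi^n(\infty)=f^n(\infty)+d$, so it is equivalent to follow the orbit of $f_\xi$; and $p/q\mapsto(p+dq)/q$ is a bijection from convergents of $\sqrt{k}$ to Pellian convergents of $\xi$ that preserves the value of the Pell form, because
\[
  (p+dq)^2-2d(p+dq)q-(k-d^2)q^2=p^2-kq^2.
\]
Thus it suffices to show that the orbit of $f_\xi$ contains every Pellian convergent of $\xi$ whose Pell value is $+1$.

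Next I would set up the unit arithmetic, reusing the material around Theorem~\ref{thm:pell}. There the Pellian convergents of $\xi$ are shown to be exactly the fractions $p_j/q_j$, $j\ge 1$, with $p_j-q_j\bar\xi=\eta^j$ for a fundamental unit $\eta$ of $\ZZ[\xi]$; and the Pell value of $p_j/q_j$ is the norm $(p_j-q_j\bar\xi)(p_j-q_j\xi)=(\eta\bar\eta)^j$. In each exceptional case the proof of Theorem~\ref{thm:pell} realizes $\ZZ[\xi]$ as a suborder of $\ZZ[\phi]$ (the Fibonacci cases) or of $\ZZ[\sqrt2]$ (the Pell-number case) that contains an \emph{odd} power of the ambient fundamental unit $\zeta\in\{\phi,\,1+\sqrt2\}$; writing $\eta=\zeta^N$ with $N$ minimal positive, this divisibility forces $N$ odd, so $\eta\bar\eta=(\zeta\bar\zeta)^N=(-1)^N=-1$. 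Therefore the Pell value of $p_j/q_j$ is $(-1)^j$: the positive-Pell convergents of $\xi$ are precisely the $p_{2j}/q_{2j}$, and the negative-Pell ones precisely the $p_{2j-1}/q_{2j-1}$.

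Finally I would close using the orbit description from the same proof. When $\xi/s\in\{\frac{3+\sqrt5}{2},\frac{5+\sqrt5}{2}\}$ (resp.\ $\xi/s=2+\sqrt2$), the LFT $f_{\xi/s}$ started from $\infty$ reaches $\zeta^2$ and thereafter its powers; since $f_\xi$ is conjugate to $f_{\xi/s}$ by $x\mapsto sx$, its orbit from $\infty$ is $s$ times that orbit, and the members of it that are Pellian for $\xi$ are exactly those attached to even powers of $\zeta$ lying in $\ZZ[\xi]$ --- which, as $N$ is odd, are exactly the even powers $\eta^{2j}$ of $\eta$ (the lemma just before Theorem~\ref{thm:pell} then supplies all of them once one is hit). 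Hence the orbit of $f_\xi$ omits only negative-Pell convergents $p_{2j-1}/q_{2j-1}$ and contains all the positive-Pell ones $p_{2j}/q_{2j}$; translating back by $x\mapsto x-d$ yields the statement. The step I expect to take the most care is this last matching: one must verify that ``$f_{\xi/s}$ picks up $\zeta^2$ and its powers'' really does pass, under the scaling by $s$ and restriction to the suborder $\ZZ[\xi]$, to ``$f_\xi$ picks up exactly the even powers of the fundamental unit of $\ZZ[\xi]$,'' so that the parity of the indices the orbit skips coincides with the minus sign of the Pell form. Everything else is routine unwinding of the shift $f\leftrightarrow f_\xi$ and of the identity relating the norm of $p-q\bar\xi$ to $p^2-2dpq-(k-d^2)q^2$.
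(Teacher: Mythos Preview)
Your proposal is correct and follows essentially the same approach as the paper: both reduce to the exceptional cases of Corollary~\ref{cor:allpell} and then observe that the Pellian convergents missed there all satisfy $p^2-kq^2=-1$. The paper dispatches this in a single sentence, whereas you spell out the details---verifying that in each exceptional case the fundamental unit of $\ZZ[\xi]$ is an odd power of $\phi$ or $1+\sqrt2$ and hence has norm $-1$, so that the orbit, which corresponds to even powers, captures exactly the positive-Pell convergents.
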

Finally, we get results about the solvability of the negative Pell equation.
\begin{cor}
If $k$ is an integer such that $4d^2/(k-d^2)$ is a \emph{negative} integer for some $d$, then the negative Pell equation $p^2 - k q^2 = -1$ has no solutions, unless $(k,d)$ is one of the exceptions in Corollary \ref{cor:allpell}.
\end{cor}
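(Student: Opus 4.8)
The plan is a short argument by contradiction that leans on Corollary \ref{cor:allpell}. Fix integers $k$ and $d$ with $R = 4d^2/(k-d^2)$ a negative integer, assume $(k,d)$ is not one of the two families of exceptional pairs listed in Corollary \ref{cor:allpell}, and suppose toward a contradiction that $p^2 - kq^2 = -1$ for some positive integers $p$ and $q$.

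The first step is to extract the sign information hidden in $R<0$. Since $4d^2 > 0$, the denominator $k - d^2$ must be negative, so $\epsilon = \sgn(k-d^2) = -1$ and $d^2 - k$ is a positive integer. By Lemma \ref{lem:AB}, every iterate is $f^n(\infty) = a/b$ with $(d+\sqrt{k})^n = a + b\sqrt{k}$, so $a^2 - kb^2 = (d^2-k)^n > 0$; writing $g = \gcd(a,b)$ and $p'/q'$ for the reduced form of $a/b$, we get $g^2(p'^2 - kq'^2) = (d^2-k)^n > 0$, hence $p'^2 - kq'^2 > 0$. In other words, the only Pell relation an iterate can satisfy in lowest terms is the positive one $p'^2 - kq'^2 = 1$: the orbit never visits a solution of the negative Pell equation.

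The second step brings in the hypothetical solution $(p,q)$. Because $k$ is a non-square positive integer, $|p^2 - kq^2| = 1 < \sqrt{k}$, so Lemma \ref{lem:conv} shows that $p/q$ is a convergent of $\sqrt{k}$, and it is then a Pellian convergent. Since $R \in \ZZ$ and $(k,d)$ is not one of the exceptional pairs, Corollary \ref{cor:allpell} guarantees $p/q \in \{f^n(\infty)\}$. But by the first step any iterate that is a Pellian convergent satisfies $p^2 - kq^2 = +1$, contradicting $p^2 - kq^2 = -1$. Hence no such solution exists.

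I do not expect a genuine obstacle: the corollary is essentially Corollary \ref{cor:allpell} together with the elementary observation that $R < 0$ forces $\epsilon = -1$ and thereby makes all iterate norms positive. The only points needing care are (i) reducing the supposed solution to an honest convergent via Lemma \ref{lem:conv}, which is legitimate because a non-square $k \geq 2$ has $\sqrt{k} > 1$; and (ii) recording that the exception clause is genuinely needed, since in the exceptional cases of Corollary \ref{cor:allpell} the orbit skips over a negative-Pell convergent in favor of a positive one, and for instance $k = 5$, $d = 3$ has $R = -9$ with $2^2 - 5\cdot 1^2 = -1$, so the negative Pell equation really can be solvable there.
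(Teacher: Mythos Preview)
Your proof is correct and follows essentially the same line as the paper's: $R<0$ forces $\epsilon=-1$, which makes every iterate satisfy the positive Pell relation, so by Corollary~\ref{cor:allpell} no negative-Pell solution can exist outside the exceptional pairs. The only cosmetic difference is that you compute the norm $a^2-kb^2=(d^2-k)^n>0$ algebraically via Lemma~\ref{lem:AB}, whereas the paper phrases the same fact geometrically (all iterates lie above $\sqrt{k}$, hence have positive norm); your explicit invocation of Lemma~\ref{lem:conv} and the worked example $k=5$, $d=3$ are extra detail rather than a different route.
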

\begin{proof}
The positivity of $d^2 - k$ is equivalent to $\epsilon = -1$, implying that all the iterates of $f$ lie above $\sqrt{k}$ and thus that any Pellians among them satisfy the positive Pell equation $p^2 - k q^2 = 1$.
\end{proof}

\section{Open questions}
\label{sec:concl}

A direction of generalization that immediately suggests itself is to iterate $f$ on initial values other than $\infty$; however, this case is almost entirely solved by the foregoing theorems. If $R$ is \emph{not} an integer, the proof of Theorem \ref{thm:easy} shows that no orbit of $f$ can contain two convergents in corresponding places within the continued fraction period; thus every orbit contains finitely many convergents, at most one of which is Pellian. If $R$ is an integer, the orbit (unless it coincides with the orbit of $\infty$) misses all the Pellian convergents but could possibly include an infinite family of convergents lying in corresponding places with respect to the period. The question then arises whether, for some $k$ and $d$, a single orbit might contain two or more convergents per period.

As $m \to \infty$, the iterates $p/q \in \{f^n(\infty)\}$ have ``Pellian error'' $p^2 - k q^2$ bounded by $s^2 v$, while the Pellian errors of all other convergents appear to tend to $\infty$. Is there a theorem in the spirit of Lemma \ref{lem:conv} that, if $|\sqrt{k} - d|$ is sufficiently small, then any fraction whose Pellian error is at most $s^2 v$ is an iterate of $f$?

Since many $k$ do not have any integer $d$ making $R$ an integer ($k = 19$ is the smallest; their density is doubtless $1$), it is natural, from the point of view of computing continued fractions and Pell equation solutions, to consider non-integral $d$. By Theorem \ref{thm:easy}, iterating $f$ on $\infty$ eventually yields a Pellian convergent, but is it the \emph{first} Pellian convergent if, for instance, we take $d$ to be the first convergent of $\sqrt{k}$ for which $R \in \ZZ$ holds? Also, we can seek analogues of the $|d-\sqrt{k}|$ conditions for the iterates to all be convergents or semiconvergents. Most intriguingly, do the resulting continued fractions fit into families, as in Theorem \ref{thm:family}, and can their terms be described by explicit formulas similar to Theorem \ref{thm:patterns}? Many of the same questions can be asked if $k$ is a non-integer, thus entering the realm of approximating arbitrary quadratic surds $\frac{p+\sqrt{k}}{q}$.
 
Finally, our work says nothing about the structure of the continued fraction expansion of $\sqrt{k}$ when $k$ is close to $d^2$ yet $R$ is not an integer. Although the orbit of $\infty$ under $f$ necessarily contains finitely many continued fraction convergents, it can contain arbitrarily many, a proof of which is suggested by the following example:
\[
  \sqrt{10^8+3} = [ 10000, 6666, 1, 2, 2221, 1, 8, 740, 1, 1, 1, 2, 2, 1, 246, 4, 1, 3, 4, 82, \ldots]
\]
This looks like the concatenation of the continued fraction expansions of certain numbers $10000$, $\frac{20000}{3}$, $\frac{19997}{9}$, $\frac{19997}{27},\ldots$ decreasing approximately by powers of $3$, and one can calculate that truncating at these spots indeed yields the iterates of $f = f_{10000}$. We may seek a formula analogous to that of Theorem \ref{thm:patterns} that expresses each packet as the continued fraction expansion of a rational number obtained from some recursion related to $f$. If this is continued forever, the resulting continued fraction is almost surely non-simple and non-periodic, but we can still ask whether it converges and whether the Pellian fractions are hidden among its convergents or semiconvergents. We wonder whether this last line of investigation extends to cube roots and to transcendental functions, where function-termed continued fraction expansions have long been known (e.g{.} $\tanh(1/x) = [0,x,3x,5x,\ldots]$), but few attempts have been made to convert such continued fractions into ones with integer terms in the case that $x$ is large and rational.

\bibliography{contfrac.bib}
\bibliographystyle{plain}

\end{document}